      \theoremstyle{plain}
      \newtheorem{theorem}{Theorem}[section]
      \newtheorem{example}[theorem]{Example}
      \newtheorem{lemma}[theorem]{Lemma}
      \newtheorem{proposition}[theorem]{Proposition}
      \newtheorem{remark}[theorem]{Remark}
\numberwithin{equation}{section}
      \def\@setcopyright{}
      \def\serieslogo@{}
\def\natural{{\mathbb N}}
\def\real{{\mathbb R}}
\def\torus{{\mathbb T}}
\def\J{{\mathcal J}}
\def\eg{\emph{e.g., }}
\def\ff{\tilde f}
\def\h{\tilde h}
\def\L{\tilde L}
\def\V{\mathcal V}
\def\Vf{\mathcal V^f}
\def\VL{\mathcal V^L}
\def\Ef{\mathcal E^f}
\def\EL{\mathcal E^L}
\def\Wf{\mathcal W^f}
\def\WL{\mathcal W^L}
\def\Uf{\mathcal U^f}
\def\UL{\mathcal U^L}
\def\F{\mathcal F}
\def\M{\mathcal{M}}
\def\E{\mathcal{E}}
\def\Td{\mathbb T^d}
\def\R{\mathbb R}
\def\Q{\mathbb Q}
\def\C{\mathbb C}
\def\Z{\mathbb Z}
\def\N{\mathbb N}
\def\T{\mathbb T}
\def\dist{\text{dist}}
\def\Id{\text{Id}}
\def\e{\epsilon}
\def\Ci{C^\infty}
\def\QED{\hfill\hfill{\square}}
\begin{document}

\date{\today}
\author{Andrey Gogolev$^\ast$, Boris Kalinin$^{\ast\ast}$, Victoria Sadovskaya$^{\ast\ast\ast}$ \\\\ with an appendix by Rafael de la LLave}

\address{Department of Mathematics,
 University of Texas at Austin, Austin, Texas 78712, USA}
 \email{agogolev@math.utexas.edu, llave@math.utexas.edu}

\address{Department of Mathematics $\&$ Statistics,
University of South  Alabama,  Mobile, AL 36688, USA}
\email{kalinin@jaguar1.usouthal.edu, sadovska@jaguar1.usouthal.edu}

\title[Local rigidity for Anosov automorphisms]
{Local rigidity for Anosov automorphisms}

\thanks{$^\ast$ Supported in part by NSF grant DMS-1001610}
\thanks{$^{\ast\ast}$  Supported in part by NSF grant DMS-0701292}
\thanks{$^{\ast\ast\ast}$ Supported in part by NSF grant DMS-0901842}


\begin{abstract}
We consider an irreducible Anosov automorphism $L$ of
a torus $\T^d$ such that
no three eigenvalues have the same modulus. We show that $L$
is locally rigid, that is, $L$ is $C^{1+\text{H\"older}}$ conjugate
to any $C^1$-small perturbation $f$ such that the derivative
$D_pf^n$ is conjugate to $L^n$ whenever $f^np=p$.
We also prove that toral automorphisms satisfying these
assumptions are generic in $SL(d,\Z)$.
Examples constructed in the Appendix show importance
of the assumption on the eigenvalues.
\end{abstract}

\maketitle


\section{Introduction}

Hyperbolic dynamical systems have been one of the main objects of
study in smooth dynamics. Basic  examples of such systems are given
by {\em Anosov automorphisms} of tori: for a hyperbolic matrix $F$
in $SL(d,\Z)$ the map $F:\R^d\to \R^d$ projects to an automorphism
of the torus $\T^d=\R^d/\Z^d$. More generally, a diffeomorphism $f$
of a compact Riemannian  manifold $\M$ is called {\em Anosov}\, if
there exist a decomposition of the tangent bundle $T\M$ into two
$f$-invariant continuous distributions $E^{s,f}$ and $E^{u,f}$, and
constants $C>0$, $\lambda>0$, such that for all $n\in \N$,
 $$
   \begin{aligned}
  \| Df^n(v) \| \leq C e^{-\lambda n} \| v \|&
      \;\text{ for all }v \in E^{s,f}, \\
  \| Df^{-n}(v) \| \leq C e^{-\lambda n} \| v \|&
      \;\text{ for all }v \in E^{u,f}.
  \end{aligned}
 $$
The distributions $E^{s,f}$ and $E^{u,f}$ are called the stable and unstable distributions~of~$f$.

Structural stability is a fundamental property of hyperbolic systems.
If $g$ is an Anosov diffeomorphism and $f$ is
sufficiently $C^1$ close to $g$, then $f$ is also Anosov and is
topologically conjugate to $g$, i.e. there is a homeomorphism
$h$ of $\M$ such that
$$
g = h^{-1} \circ  f \circ h.
$$
In this paper we study  regularity of the conjugacy $h$. It is well
known that in general $h$ is only H\"older continuous. A necessary
condition for it to be $C^1$
is that  the derivatives of the return
maps of $f$ and $g$ at the corresponding periodic points are
conjugate. Indeed, differentiating $g^n = h^{-1} \circ  f^n \circ h$
at a periodic point $p=f^n(p)$ gives
$$
 D_pg^n = (D_ph)^{-1} \circ D_{h(p)} f^n \circ D_ph.
$$
A diffeomorphism $g$ is said to be {\em locally rigid} if for any
$C^1$-small perturbation $f$ this  condition is  also sufficient for
the conjugacy to be a $C^1$ diffeomorphism. The problem of local
rigidity has been extensively studied and  Anosov systems with
one-dimensional stable and unstable distributions were shown to be
locally rigid \cite{L0,LM, L1,P}.

Local rigidity problem in higher dimensions is much less understood.
Examples where the periodic condition is not sufficient were constructed by
R. de la Llave  \cite{L1,L2}. However, the one-dimensional results
were extended in two directions. In the case when $g$ is conformal
on the full stable and unstable distributions, local rigidity was
established for some classes of systems \cite{L2,KS1,L3,KS3}.

In a different direction, local rigidity was proved in \cite{G} for
an irreducible Anosov toral automorphism $L : \T^d \to \T^d$
with real eigenvalues of distinct moduli, as well as for some
nonlinear systems with similar structure. We recall that $L$ is said
to be {\em irreducible}\, if it has no rational invariant subspaces,
or equivalently if its characteristic polynomial is irreducible over
$\Q$. It follows that all eigenvalues of $L$ are simple.
An important feature of this case is that $\R^d$ splits into a direct sum
of one-dimensional $L$-invariant subspaces. This splitting gives rise
to the corresponding linear foliations on $\T^d$ which are
expanded or contracted by $L$ at different rates.
Such a splitting persist for $C^1$-small perturbations of $L$ and
provides a framework for studying regularity of the conjugacy.
\vskip.1cm

Examples in~\cite{G}  show that irreducibility of $L$ is a necessary
assumption for local rigidity except when $L$ is conformal on
the stable and unstable distributions.
The main result of this paper is the following theorem which
establishes local rigidity for a broad class of  irreducible
toral automorphisms.
We give a concise proof that uses  techniques from \cite{G,L2,KS3}
 along with some new results on conformality
of cocycles from \cite{KS4}.

\begin{theorem} \label{main}
Let $L:\T^d\to\T^d$ be an irreducible Anosov automorphism
such that no three of its eigenvalues have the same modulus.
Let  $f$ be a $C^1$-small perturbation of $L$ such that the
derivative $D_pf^n$ is conjugate to $L^n$ whenever $p=f^np$.
Then $f$ is $C^{1+\text{H\"older}}$ conjugate to $L$.

\end{theorem}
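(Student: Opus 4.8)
The plan is to adapt the argument of \cite{G} to the present setting, in which the eigenvalue hypothesis forces the coarse Lyapunov foliations to be at most two-dimensional, and to treat the new two-dimensional pieces with the conformality results of \cite{KS4} together with the conformal-rigidity techniques of \cite{L2,KS3}. By structural stability $f$ is Anosov and there is a homeomorphism $h$ of $\T^d$ with $h\circ f=L\circ h$; one must show that $h$ is a $C^{1+\text{H\"older}}$ diffeomorphism. Since $L$ is irreducible its eigenvalues are simple, and by hypothesis no modulus is shared by three of them, so grouping the eigenspaces by modulus gives an $L$-invariant splitting $\R^d=\bigoplus_i V_i$ with each $V_i$ of dimension $1$ or $2$: a two-dimensional $V_i$ carries either a complex-conjugate pair, on which $L$ acts conformally, or a real pair $\{\mu,-\mu\}$, on which $L^2$ acts as the scalar $\mu^2$. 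Each $V_i$ is expanded or contracted by $L$ at a rate $\rho_i$, and the $\rho_i$ are pairwise distinct, so $\bigoplus_i V_i$ is the finest dominated splitting of $L$; it projects to linear $L$-invariant foliations $\WL_i$ of $\T^d$.

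Because the $\rho_i$ are distinct, the finest dominated splitting persists under $C^1$-perturbation, so $f$ admits an $f$-invariant H\"older splitting $T\T^d=\bigoplus_i E^f_i$ with $E^f_i$ close to $V_i$, and each $E^f_i$ integrates to an $f$-invariant foliation $\Wf_i$ with uniformly $C^{1+\text{H\"older}}$ leaves (for the subbundles inside $E^{s,f}$, resp. $E^{u,f}$, this is the standard theory of fast (un)stable subfoliations; there is no center since $L$ is Anosov). The coarse Lyapunov foliations being dynamically characterized, and $f$ being $C^1$-close to $L$ so that the conjugacy does not mix the finitely many exponential rates attached to distinct spectral classes, $h$ maps $\Wf_i$ onto $\WL_i$ for each $i$, as in \cite{G}. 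By Journ\'e's lemma, applied inductively to reassemble first $E^{s,f}$ and $E^{u,f}$ and then $T\T^d$, it now suffices to prove that $h$ is uniformly $C^{1+\text{H\"older}}$ along the leaves of each $\Wf_i$.

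A dominated block over a periodic orbit has its spectrum confined to the corresponding modulus group, so the hypothesis that $D_pf^n$ is conjugate to $L^n$ refines, for every $p=f^np$, to the statement that $D_pf^n|_{E^f_i(p)}$ is conjugate to $L^n|_{V_i}$. If $\dim E^f_i=1$, this says the scalar cocycle $Df|_{E^f_i}$ has periodic products equal to $\mu_i^n$, so by the Liv\v{s}ic theorem it is cohomologous to the constant $\mu_i$, and the classical one-dimensional argument of \cite{L0,LM,L1,P} then shows that $h$ is $C^{1+\text{H\"older}}$ along $\Wf_i$. If $\dim E^f_i=2$, the periodic data of the $GL(2,\R)$-valued cocycle $A_i:=Df|_{E^f_i}$ is conjugate to conformal matrices in the complex-pair case, while in the real-pair case, after replacing $f$ by $f^2$, the block $L^2|_{V_i}$ is the scalar $\mu^2$ and the periodic data of $A_i^{(2)}$ is scalar. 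In either situation the conformality criterion of \cite{KS4} applies — this is exactly where $\dim V_i\le 2$ enters — and yields a H\"older Riemannian metric on $E^f_i$ with respect to which $A_i$ (resp. $A_i^{(2)}$) is conformal, with conformal factor cohomologous (via Liv\v{s}ic again) to $\rho_i$ (resp. $\rho_i^2$).

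It remains to upgrade this conformality to leafwise $C^{1+\text{H\"older}}$ regularity of $h$ along the two-dimensional foliations $\Wf_i$, which is done as in \cite{L2,KS3}: the $Df$-invariant continuous conformal structure on the $C^{1+\text{H\"older}}$ leaves is asymptotically determined by the dynamics, hence $C^{1+\text{H\"older}}$ along leaves; in leaf coordinates making it flat, $f$ acts as a conformal expansion/contraction with scalar derivative cohomologous to $\rho_i$, just as $L$ does on the flat leaves of $\WL_i$, so along leaves $h$ intertwines two conformal affine models of the same rate and must itself be conformal affine, hence $C^{1+\text{H\"older}}$ along $\Wf_i$. Together with the reduction above this proves the theorem. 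The main difficulty lies in this last pair of steps on two-dimensional coarse Lyapunov subbundles — extracting genuine conformality of the derivative cocycle from the periodic data via \cite{KS4}, and then promoting conformality to leafwise smoothness of $h$ — and this is precisely where the eigenvalue hypothesis is indispensable: a coarse Lyapunov subbundle of dimension $\ge 3$ need not be conformal even when its periodic data is, and the examples in the Appendix show that local rigidity genuinely fails in that case.
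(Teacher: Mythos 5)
Your outline of the easy scaffolding is correct: the coarse Lyapunov decomposition by moduli, the $C^1$-persistence of the finest dominated splitting and of the corresponding foliations with $C^{1+}$ leaves, the reduction via Journ\'e to leafwise regularity, and the use of \cite{KS4} plus Liv\v{s}ic to extract conformality and then leafwise smoothness along a coarse Lyapunov foliation \emph{once it is known that $h$ maps that foliation to its linear counterpart}. That last clause, however, is the crux, and it is where the proposal has a genuine gap.

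You assert that ``the coarse Lyapunov foliations being dynamically characterized, and $f$ being $C^1$-close to $L$ \ldots\ $h$ maps $\Wf_i$ onto $\WL_i$ for each $i$, as in \cite{G}.'' This is not automatic, and the dynamical-characterization argument does not give it. A topological conjugacy that is $C^0$-close to the identity preserves the \emph{weak} unstable flag $W^L_{(1,k)}\mapsto W^f_{(1,k)}$, because membership in the weak flag is an upper bound on forward divergence rates, and an additive $C^0$ error is negligible against an exponentially growing bound (this is exactly Lemma~\ref{weak_flag_pres}). But membership in the \emph{strong} flag, or equivalently in an individual $V_i$, involves a lower bound on expansion (equivalently, an upper bound on backward contraction), which is \emph{not} invariant under a merely H\"older conjugacy: the $C^0$ error term swamps an exponentially shrinking quantity, and the H\"older exponent $\alpha<1$ of $h$ degrades the rate. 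Thus $h(V^L_i)=V^f_i$ is a nontrivial assertion, and establishing it is the main new technical content of the proof (and of \cite{G} in the one-dimensional-blocks case).

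The paper proves $h(V^L_k)=V^f_k$ by an induction in which the \emph{already established leafwise smoothness} of $h$ along the slower foliations is an essential ingredient, not a consequence: one sets $\F=h^{-1}(\Uf)$ where $\Uf$ is the fast subfoliation inside $\Wf$, and shows (Lemma~\ref{translation}) that the $\F$-holonomy between leaves of $\VL$ is an affine parallel translation. This uses that $h$ is $C^{1+}$ along $\VL$ (to differentiate the conjugated holonomy) together with conformality of $L$ on $\EL$ (to control $L^n\circ(\cdot)\circ L^{-n}$). One then runs a density/ladder argument (Proposition~\ref{fast_to_fast}): if $\F\ne\UL$ one builds, using affineness of the holonomy and density of $\VL$-leaves, a sequence of points on a single $\F$-leaf whose $\VL$-displacement from $\UL(a)$ grows linearly, and pulling back by $L^{-N(n)}$ contradicts compactness. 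Nothing of this kind appears in your proposal; you take the foliation-matching as given and proceed directly to regularity. Because the smoothness step (Proposition~\ref{smooth along V}) \emph{presupposes} $h(V^L_i)=V^f_i$ and the matching step \emph{presupposes} smoothness along slower foliations, the two must be interleaved in an induction on $k$; your proposal collapses this interdependence, and the resulting argument does not close. (The smaller discrepancies — passing to $f^2$ for the $\{\mu,-\mu\}$ blocks, which is unnecessary since \cite{KS4} only needs equal moduli and diagonalizability at periodic points, and the more heuristic route to leafwise smoothness via an ``asymptotically determined conformal structure'' rather than the concrete Lipschitz-limit plus measurable-Liv\v{s}ic argument — are matters of style by comparison.)
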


We note that irreducibility of $L$ implies that it is diagonalizable
over $\C$. Hence assuming that $D_pf^n$ is conjugate to $L^n$ is
equivalent to assuming that $D_pf^n$ is also diagonalizable over
$\C$ and has the same eigenvalues as $L^n$. The only extra
assumption in the theorem ensures that the dimensions of the
subspaces in the splitting by rates of expansion/contraction are not
higher than two. It allows $L$ to have pairs of complex conjugate
eigenvalues as well as pairs $\lambda, -\lambda$.
We prove this theorem in Section~2.

In~Section~3, we show that toral automorphisms satisfying the
assumptions of the theorem are generic in the following sense.
Consider the set of matrices in $SL(d,\Z)$ of norm at most $T$.
Then the proportion of matrices corresponding to automorphisms
that do not satisfy our assumptions goes to zero as $T \to \infty$.
Moreover, it can be estimated by $c\,T^{-\delta}$ for some $\delta >0$.

Example~A.3 in the Appendix yields an Anosov toral automorphism
conformal on a three-dimen\-sional invariant subspace and a perturbation
with conjugate periodic data whose derivative is not uniformly quasi-conformal
on the corresponding three-dimensional invariant distribution. This, in
particular, precludes smoothness of the conjugacy. The automorphism
is reducible, so the example does not prove that the extra assumption is
indeed necessary for our theorem. However, it clearly shows that current
methods cannot be pushed further to give the result without this assumption.


\section{Proof of Theorem \ref{main}} \label{proofs}

\subsection{Notation and outline of the proof}
We denote by $E^{s,L}$ and $E^{u,L}$  the stable and unstable
distributions of $L$. Since $f$ is $C^1$ close to $L$, $f$ is also
Anosov and we denote its stable and unstable distributions by
$E^{s,f}$ and $E^{u,f}$. They are tangent to the stable and unstable
foliations $W^{s,f}$ and $W^{u,f}$ respectively (see, e.g.
\cite{KH}). The leaves of these foliations are $C^\infty$ smooth,
but in general the distributions $E^{s,f}$ and $E^{u,f}$  are only
H\"older continuous transversally to the corresponding foliations.
\vskip.2cm

Let $1<\rho_1 <\rho_2< \dots <\rho_l$ be the distinct moduli
of the unstable eigenvalues of $L$, and let
$$
 E^{u,L}= E_1^L \oplus E_2^L \oplus \dots \oplus E_l^L
$$
 be the corresponding splitting of the unstable distribution.

By the assumption,  the distributions $E_k^L$, $k=1, \dots, l$, are
either one- or two-dimensional. As $f$ is $C^1$-close to $L$, the
unstable distribution $E^{u,f}$ splits into a direct sum of $l$
invariant H\"older continuous distributions close to the
corresponding distributions for $L$:
 $$
   E^{u,f}= E_1^f \oplus E_2^f \oplus \dots \oplus E_l^f
$$
(see, e.g. ~\cite[Section 3.3]{Pes}). We also consider the distributions
$$
  E^f_{(i,j)}=E_i^f\oplus E_{i+1}^f\oplus\ldots\oplus E_j^f.
$$
For any $1 < k \le l$, the distribution $E_{(k,l)}^f$
is a fast part of the unstable distribution and thus it integrates
to a H\"older foliation $W_{(k,l)}^f$ with $C^\infty$ smooth leaves
(see, e.g. ~\cite[Section 3.3]{Pes}).
Moreover, the leaves $W_{(k,l)}^f(x)$ depend
$C^\infty$ smoothly on $x$ along the unstable leaves $W^{u,f}$
(see, e.g. ~ \cite[Proposition 3.9]{KS2}).

\vskip.1cm

{\bf Notation.}
 We say that an object is $C^{1+}$ if it is $C^1$ and its differential
  is H\"{o}lder continuous with some positive exponent.
We say that a homeomorphism $h$ is $C^{1+}$ along a foliation
$\mathcal F$ if the restrictions of $h$ to the leaves of $\mathcal
F$ is $C^{1+}$ and the derivative $Dh|_{\mathcal F}$ is H\"older
continuous on the manifold.
\vskip.1cm

For any $1 \le k < l$ the distribution $E^f_{(1,k)}$ is a slow part
of the unstable distribution. It also integrates to an $f$-invariant
foliation $W_{(1,k)}^f$ with $C^{1+}$ smooth leaves. One way to see
this is to view $L$ as a partially hyperbolic automorphism with the
splitting $E^{s,L}\oplus E_{(1,k)}^L\oplus E_{(k+1,l)}^l$. It follows from 
the structural stability of partially hyperbolic systems~\cite[Theorem
7.1]{HPS}  that for a $C^1$-small perturbation $f$ the
``central" foliation survives; that is, $E^f_{(1,k)}$ integrates to
a foliation $W_{(1,k)}^f$. For an alternative simple and short proof
that uses specifics of our setup and also gives unique integrability,
(as opposed to existence of some foliation tangent to $E^f_{(1,k)}$)
see~\cite[Lemma~6.1]{G}.

Thus within the unstable distribution $E^{u,f}$ there are  flags of
weak and strong distributions
$$
E_1^f=E_{(1,1)}^f\subset E_{(1,2)}^f\subset
\ldots \subset E_{(1,l)}^f  =
E^{u,f},
$$
$$
E_l^f=E_{(l,l)}^f\subset E_{(l-1,l)}^f\subset
\ldots \subset E_{(1,l)}^f  =
E^{u,f}.
$$
Since both flags are uniquely integrable and the leaves of the
corresponding foliations are at least $C^{1+}$, for any $1 \le k \le l\,$ the
distribution $E_k^f=E_{(1,k)}^f\cap E_{(k,l)}^f$ also integrates uniquely
to a H\"older foliation
$$V_k^f=W_{(1,k)}^f\cap W_{(k,l)}^f$$
with $C^{1+}$ smooth leaves. Similarly,
the distributions $E_{(i,j)}^f=E_{(1,j)}^f\cap E_{(i,l)}^f$,  $1\le i\le j\le l$,
integrate to H\"older foliations
$$W_{(i,j)}^f=W_{(1,j)}^f\cap
W_{(i,l)}^f.$$

 \vskip.1cm

We use similar notation for the automorphism $L$:
$E_{(i,j)}^L=E_i^L\oplus\ldots\oplus E_j^L, $ and $W_{(i,j)}^L$ and
$V_i^L$ are the linear foliations tangent to $E_{(i,j)}^L$ and
$E_i^L$ respectively. \vskip.2cm

Since $L$ is Anosov and $f$ is $C^1$ close to $L$, there exists a
bi-H\"older continuous homeomorphism $h:\T^d\to \T^d$ close to the
identity in $C^0$ topology such that
  $$
    h \circ L = f \circ h.
  $$
The conjugacy $h$ takes the flag of weak foliations  for $L$ into
the corresponding  weak flag for $f$:

\begin{lemma}
\label{weak_flag_pres}
For any $1\le k\le l,$ $\,h(W_{(1,k)}^L )=W_{(1,k)}^f$.
\end{lemma}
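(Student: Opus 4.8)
The plan is to describe each leaf of $W_{(1,k)}^L$ and of $W_{(1,k)}^f$ purely dynamically, inside the coarse unstable foliation, in terms of the exponential rate at which forward iterates of two points on a common unstable leaf drift apart, and then to check that $h$ matches these two descriptions. The one genuinely delicate point — the main obstacle — is that $h$ is only bi-H\"older with an a priori uncontrolled exponent, so one cannot transport rates of separation through it directly; this is circumvented by reading those rates off at large scales, where $h$, lifted to the universal cover, distorts distances only by a bounded additive constant.

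I would begin by recording two preliminary facts. First, $h$ carries the unstable foliation of $L$ onto that of $f$: if $y\in W^{u,L}(x)$ then $\dist(L^{-n}x,L^{-n}y)\to0$, hence $\dist(f^{-n}hx,f^{-n}hy)=\dist(hL^{-n}x,hL^{-n}y)\to0$ by uniform continuity of $h$, so $hy\in W^{u,f}(hx)$; applying this to $h^{-1}$ as well gives $h(W^{u,L}(x))=W^{u,f}(hx)$. Second, passing to the universal cover $\R^d$ and keeping the symbols $L,f,h$ for the lifts, we have $h\circ L=f\circ h$ and $D:=\sup_{x\in\R^d}|h(x)-x|<\infty$, so
\[
\bigl|\,|h(u)-h(v)|-|u-v|\,\bigr|\le 2D\qquad\text{for all }u,v\in\R^d .
\]
The dynamical description is then: for $y$ on the unstable leaf through $x$, writing $v=y-x=v_1+\dots+v_l$ with $v_j\in E_j^L$, one has $y\in W_{(1,k)}^L(x)$ iff $v_{k+1}=\dots=v_l=0$, in which case $|L^nv|\le C\rho_k^{\,n}$ for all $n$, whereas if some $v_m\ne0$ with $m\ge k+1$ then the fastest surviving component is not cancelled by the slower ones and $|L^nv|\ge c\,\rho_{k+1}^{\,n}$ for all large $n$ (here I use that $L$ is diagonalizable over $\C$ with the $\rho_j$ as the distinct moduli). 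The analogous dichotomy holds for $f$ along its unstable leaves, in the intrinsic leaf metric and with $\rho_k,\rho_{k+1}$ replaced by $\rho_k+\e,\rho_{k+1}-\e$ for any small $\e>0$ with $\rho_k+\e<\rho_{k+1}-\e$: this uses only that $W_{(1,k)}^f$ and $W_{(k+1,l)}^f$ are $f$-invariant foliations with $C^{1+}$ leaves tangent to $E_{(1,k)}^f$ and $E_{(k+1,l)}^f$, the resulting product structure of the unstable leaf, the usual uniform bounds for $Df^{\pm n}$ on these bundles (all close to those of $L$), and the fact that lifted unstable leaves of $f$ are uniformly quasi-isometric to their intrinsic metrics.

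The transfer is then essentially one line. If $y\in W_{(1,k)}^L(x)$, choose lifts with $\tilde y-\tilde x\in E_{(1,k)}^L$; then along the unstable leaf through $h(\tilde x)$ (and using $f^nh=hL^n$),
\[
d_{W^{u,f}}\bigl(f^nh(\tilde x),\,f^nh(\tilde y)\bigr)\ \le\ C_1\bigl(\,|h(L^n\tilde x)-h(L^n\tilde y)|+1\,\bigr)\ \le\ C_1\bigl(\,|L^n\tilde x-L^n\tilde y|+2D+1\,\bigr)\ \le\ C_2\,\rho_k^{\,n},
\]
the first inequality being the leaf/ambient comparison and the last one $|L^n\tilde x-L^n\tilde y|=|L^nv|\le C\rho_k^{\,n}$. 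Since $h(\tilde y)$ lies on the unstable leaf of $h(\tilde x)$, the $f$-dichotomy forces $h(\tilde y)\in W_{(1,k)}^f(h(\tilde x))$: otherwise the left side would be $\ge c(\rho_{k+1}-\e)^n$ for large $n$, contradicting $\le C_2\rho_k^{\,n}$ because $\rho_k<\rho_{k+1}-\e$. Projecting down gives $h(W_{(1,k)}^L(x))\subseteq W_{(1,k)}^f(hx)$ for every $x$. Running the identical argument for the conjugacy $h^{-1}$ from $f$ to $L$ — start from $z'\in W_{(1,k)}^f(z)$, use the easy half of the $f$-dichotomy to bound $d_{W^{u,f}}(f^nz,f^nz')\le C(\rho_k+\e)^n$, push this through $h^{-1}$ exactly as above, and invoke the $L$-dichotomy — yields $h^{-1}(W_{(1,k)}^f(z))\subseteq W_{(1,k)}^L(h^{-1}z)$, i.e.\ $W_{(1,k)}^f(hx)\subseteq h(W_{(1,k)}^L(x))$. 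Combining the two inclusions gives $h(W_{(1,k)}^L(x))=W_{(1,k)}^f(hx)$ for all $x$, which is the assertion of the lemma.

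As noted, the only real obstacle is matching exponential separation rates across a map of uncontrolled H\"older exponent; the resolution is that these separations, taken along unstable leaves, are unbounded, and on the universal cover $h$ perturbs every distance only by the additive constant $2D$, which is negligible against exponential growth — so the gap $\rho_k<\rho_{k+1}$, itself stable under $C^1$-small perturbations, is preserved. (This step does not use the hypothesis that no three eigenvalues of $L$ share a modulus; that hypothesis is used elsewhere, and enters here only through the existence of the foliations $W_{(i,j)}^f$ taken for granted in the excerpt.)
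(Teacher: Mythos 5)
Your proof is correct and follows essentially the same route as the paper's (which the paper attributes to Lemma~6.3 of~\cite{G}): lift to the universal cover, characterize the weak flag leaves by the exponential rate at which forward iterates of points on a common unstable leaf separate, and transfer that characterization through $h$ by exploiting that the lifted conjugacy distorts Euclidean distances by at most a bounded additive constant, which is negligible against the exponential gap $\rho_k<\rho_{k+1}$. Your write-up is somewhat more explicit than the paper's terse version — spelling out the two-sided dichotomy, the leaf-versus-ambient metric comparison, and the separate treatment of the two inclusions — but the key idea (additive boundedness of $h-\mathrm{Id}$ on $\R^d$ versus exponential separation) is identical.
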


The proof is the same as that of Lemma~6.3 in~\cite{G}. We
give the argument for the reader's convenience.

\begin{proof}
Let $\h$, $\ff$ and $\L$ be the lifts of $h$, $f$ and $L$ to
$\mathbb R^d$. Similarly we use the tilde sign to denote lifts of
various foliations.
\vskip.1cm

Since $\h(\tilde W^{u, L})=\tilde W^{u, f}$ we have that
$\h(\tilde W_{(1,k)}^{L})\subset\tilde W^{u, f}$.
Let $y\in \tilde W^{u, L}(x)$, then
$$
y\in \tilde W_{(1,k)}^L(x) \, \text{ if and only if }\,
\text{dist} (\L^n(x),\L^n(y) )\le(\rho_k+\epsilon)^n\,\text{dist}(x,y) 
\,\text{ for all }n>0,
$$
where $\text{dist}$ is the standard metric on $\mathbb R^d$. Since $\h$ is $C^0$
close to $\Id$ we further obtain that $y\in \tilde W_{(1,k)}^L(x)$
if and only if for all $n>0$
$$
\text{dist}(\ff^n(\tilde h(x)),\ff^n(\tilde h(y)))=
\text{dist}(\h(\L^n(x)),\h(\L^n(y)))\le(\rho_k+\epsilon)^n\,\text{dist}(x,y)+c.
$$
The latter condition in turn is equivalent to $\h(y)\in \tilde
W_{(1,k)}^f(\tilde h(x))$.
\end{proof}
 We note that
Lemma~\ref{weak_flag_pres} holds for any sufficiently $C^1$-small
perturbation of an Anosov automorphism of $\T^d$. \vskip.1cm

The coarse strategy  of the proof of Theorem~\ref{main} is showing
inductively that $h$ is $C^{1+}$ along $W_{(1,k)}^L$ for any $k$ and
thus along $W_{(1,l)}^L=W^u(L)$. By the same argument, $h$ is
$C^{1+}$ along $W^s(L)$ and hence $h$ is $C^{1+}$ by Journ\'e Lemma:

\begin{lemma}[Journ\'e~\cite{J}]  \label{Journe}
Let $\M_j$ be a manifold and $\mathcal F_j^s$, $\mathcal F_j^u$ be
continuous transverse foliations on $\M_j$ with uniformly smooth
leaves, $j=1, 2$. Suppose that $h:\M_1\to \M_2$ is a homeomorphism
that maps $\mathcal F_1^s$  into $\mathcal F_2^s$ and $\mathcal
F_1^u$ into $\mathcal F_2^u$. Moreover, assume that the restrictions
of $h$ to the leaves of these foliations are uniformly $C^{r+\nu}$,
$r\in \mathbb N$, $0<\nu<1$. Then $h$ is $C^{r+\nu}$.
\end{lemma}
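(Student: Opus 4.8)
\emph{Strategy.} The plan is to reduce the statement to a local one in Euclidean space and then prove it by a multiscale (dyadic) approximation argument. Since regularity is local, it suffices to show $h$ is $C^{r+\nu}$ near each point. Composing with smooth charts on $\M_1$ and $\M_2$ and working one coordinate of the target at a time, we may assume $\M_1$ is a ball $B\subset\R^n$ carrying two transverse continuous foliations $\mathcal F^s,\mathcal F^u$ with uniformly $C^\infty$ leaves and with $TB=T\mathcal F^s\oplus T\mathcal F^u$, and that $h=f:B\to\R$ is continuous with restrictions to the leaves of $\mathcal F^s$ and of $\mathcal F^u$ uniformly $C^{r+\nu}$; the goal is $f\in C^{r+\nu}(B)$. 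I would induct on $r$. The base case $r=0$ is elementary: transversality gives a local product structure, so for nearby $p,q$ there is a point $z$ with $z\in\mathcal F^s_{\mathrm{loc}}(p)$, $z\in\mathcal F^u_{\mathrm{loc}}(q)$ and, by uniform transversality together with uniform smoothness of leaves, $d_{\mathcal F^s}(p,z)+d_{\mathcal F^u}(z,q)\le C\,|p-q|$; travelling from $p$ to $z$ inside $\mathcal F^s(p)$ and from $z$ to $q$ inside $\mathcal F^u(q)$ and using the leafwise $C^\nu$ bounds gives $|f(p)-f(q)|\le C'|p-q|^\nu$.

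\emph{The multiscale mechanism.} For $r\ge 1$ this broken-path idea no longer suffices: one would want the leafwise derivatives of $f$ (say the derivative of $f$ along the $\mathcal F^s$-leaves) to be transversally H\"older, but this cannot be obtained directly, because neither foliation can be straightened by a diffeomorphism and the holonomies between leaves are merely continuous, so differentiating along $\mathcal F^s$ destroys all transverse control. Instead one approximates $f$ at all dyadic scales simultaneously. Using that the leaves are uniformly smooth, one introduces leafwise mollifiers $S^s_k$ (``convolve $f$ along each $\mathcal F^s$-leaf against a bump of width $2^{-k}$'') and likewise $S^u_k$, and combines them into smooth approximants $f_k$ on $B$. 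The leafwise $C^{r+\nu}$ hypotheses yield $\|f-S^s_k f\|_{C^0}\lesssim 2^{-k(r+\nu)}$ and the Bernstein-type bounds $\|D^{s,m}S^s_k f\|_{C^0}\lesssim 2^{k(m-r-\nu)}$ along $\mathcal F^s$-leaves, and symmetrically for $\mathcal F^u$; since the two foliations span $TB$, the combined construction produces genuine smooth $f_k$ on $B$ with $\|f-f_k\|_{C^0}\lesssim 2^{-k(r+\nu)}$ and $\|D^m f_k\|_{C^0}\lesssim 2^{k(m-r-\nu)}$ for all $m$. The classical Jackson--Bernstein characterization of the H\"older--Zygmund space $C^{r+\nu}$ (a function admitting such a sequence of smooth approximants lies in $C^{r+\nu}$) then gives $f\in C^{r+\nu}(B)$, completing the induction. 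Only $r=1$ is needed for the present application.

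\emph{Main obstacle.} The hard point is making the two leafwise mollifications cooperate: a priori $S^s_k f$ is smooth along $\mathcal F^s$-leaves but, because moving transversally mixes leaves via continuous-only holonomies, it need not be better than continuous in the $\mathcal F^u$-direction, so one cannot naively iterate. One must choose the mollifications compatibly and track how scale-$2^{-k}$ smoothing in one foliation interacts with $C^{r+\nu}$ regularity in the other, keeping all constants uniform via the uniform smoothness of the leaves; this is precisely where that hypothesis is indispensable, and it is the part carried out in detail in the original proof \cite{J}. Establishing the Bernstein bounds on the transverse derivatives of $f_k$ is the multiscale substitute for the transverse H\"older continuity of leafwise derivatives that one cannot extract directly.
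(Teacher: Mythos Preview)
The paper does not prove this lemma at all: it is quoted from Journ\'e's article \cite{J} and used as a black box, so there is no ``paper's own proof'' to compare against. What you have written is a plausible high-level outline of \emph{a} proof of Journ\'e's result, not a comparison target for anything in this paper.

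As for the outline itself, two remarks. First, the strategy you sketch (leafwise mollifiers $S^s_k,S^u_k$, Bernstein-type bounds, Jackson--Bernstein characterization of $C^{r+\nu}$) is not quite Journ\'e's original argument. Journ\'e works instead with a Campanato/Whitney-type characterization: at each point he builds, by induction on dyadic scales, a candidate Taylor polynomial of degree $r$ obtained by combining the leafwise jets, and verifies the polynomial-approximation estimate $|f(y)-P_x(y)|\lesssim |y-x|^{r+\nu}$ that characterizes $C^{r+\nu}$. The ``main obstacle'' you identify---that smoothing along one foliation gives no transverse control because holonomies are only continuous---is real, but Journ\'e handles it by this polynomial bookkeeping rather than by mollifiers; your sentence ``this is precisely \dots carried out in detail in the original proof \cite{J}'' is therefore somewhat misleading about what \cite{J} actually does.

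Second, your proposal is, by your own admission, only a strategy: the genuinely hard step (producing $f_k$ that are simultaneously smooth on $B$ with the stated two-sided estimates, despite the merely continuous transverse structure) is deferred rather than executed. So this is an informed sketch, not a proof. For the purposes of the present paper that is immaterial, since the lemma is invoked, not proved.
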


The main steps of the proof of the Theorem are the following statements:
\begin{itemize}
\item $h(V^L_i)=V^f_i$;
\item $h$ is a $C^{1+}$ diffeomorphism along $V_i^L$.
\end{itemize}
Their proofs are interdependent and
organized into an inductive process given by Propositions
\ref{smooth along V}  and \ref{fast_to_fast}.

\begin{proposition} \label{smooth along V}
If $\,h(V^L_i)=V^f_i$, then
 $h$ is a $C^{1+}$ diffeomorphism along $V_i^L$.

\end{proposition}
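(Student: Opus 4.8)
The plan is to reduce the statement to a one-dimensional (or uniformly quasi-conformal two-dimensional) smoothing problem along the individual foliation $V_i^L$, where the periodic-data hypothesis can be exploited via a cocycle argument. First I would observe that the leaves of $V_i^L$ and $V_i^f$ are smooth curves (if $\dim E_i = 1$) or smooth surfaces (if $\dim E_i = 2$), depending $C^{1+}$ and even $C^\infty$ along the ambient unstable leaves as recorded in Section 2.1; moreover $h$ maps $V_i^L$ to $V_i^f$ by hypothesis, so $h$ restricted to a leaf $V_i^L(x)$ is a homeomorphism onto $V_i^f(h(x))$ conjugating the restriction of $L$ to the restriction of $f$. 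The derivative cocycle of $f$ over $W^{u,f}$ restricted to the invariant subbundle $E_i^f$ is a H\"older continuous linear cocycle, and the corresponding cocycle for $L$ over $V_i^L$ is constant (given by the restriction of $L$ to $E_i^L$). The periodic-data assumption says precisely that these two cocycles have conjugate return maps over every periodic orbit.

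The heart of the argument is to invoke the conformality/rigidity results for cocycles from \cite{KS4} (together with the Liv\v{s}ic-type machinery used in \cite{G} and \cite{KS3}). Since $\dim E_i^f \le 2$ and the periodic data of $Df|_{E_i^f}$ agrees with that of the conformal (indeed scalar, in the one-dimensional case, or conformal in the two-dimensional complex-eigenvalue case) linear map $L|_{E_i^L}$, the results of \cite{KS4} yield that the cocycle $Df|_{E_i^f}$ is uniformly quasi-conformal, and in fact cohomologous via a H\"older transfer function to a conformal cocycle with constant conformal factor equal to $|\rho_i|$ (and, in the $\pm\lambda$ case, one works with $f^2$ or passes to the appropriate index-two subgroup so that the relevant cocycle becomes conformal). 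This conformality is exactly the input needed: it upgrades the topological conjugacy $h$ along $V_i^L$ to a $C^{1+}$ diffeomorphism along $V_i^L$. Concretely, one builds the candidate derivative $Dh|_{V_i^L}$ out of the H\"older transfer function provided by \cite{KS4}, checks that it genuinely differentiates $h$ along the leaves by a distortion-control argument along $W^{u,f}$ (comparing $f^n$-images of small leaf-arcs and using the matching Lyapunov rate $\rho_i$ on both sides), and obtains H\"older dependence of this derivative on the base point from the H\"older regularity of the bundle $E_i^f$, the transfer function, and the leaves. This is the standard scheme by which conjugacies between one-dimensional (or conformal) hyperbolic systems with matching periodic data are shown to be smooth; see \cite{L0,LM,L1,P,KS3}.

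I expect the main obstacle to be the two-dimensional case of $E_i^f$: when $E_i^L$ carries a pair of complex conjugate eigenvalues (or a pair $\lambda,-\lambda$), one does not have a genuine one-dimensional expansion rate, and the derivative cocycle $Df|_{E_i^f}$ need not be conformal \emph{a priori} — only its periodic data forces it to be. Establishing uniform quasi-conformality and then conformality of this cocycle from the periodic data is precisely where the new results of \cite{KS4} are essential, and verifying that their hypotheses are met (H\"older continuity, transitivity of $W^{u,f}$ together with closing/specification to run the Liv\v{s}ic argument, and the two-dimensionality bound guaranteed by the "no three eigenvalues of equal modulus" assumption) is the technical crux. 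Once conformality is in hand, the passage from a conformal topological conjugacy to a $C^{1+}$ conjugacy along the leaf foliation is routine, following the arguments of \cite{G,KS3}.
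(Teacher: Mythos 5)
Your proposal correctly identifies the role of \cite{KS4} (conformality of $Df|_{E_i^f}$ from periodic data) and the general Liv\v{s}ic-style strategy, and correctly flags the two-dimensional case as the place where \cite{KS4} is essential. But there are two genuine gaps at the core of the argument.

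First, you do not actually establish that $h$ is differentiable along $V_i^L$. You propose to ``build the candidate derivative $Dh|_{V_i^L}$ out of the H\"older transfer function provided by \cite{KS4},'' but the object that \cite{KS4} produces is a conformal Riemannian \emph{metric} on the bundle $E_i^f$ — it is not, and does not directly yield, a transfer function conjugating the cocycle $L|_{E_i^L}$ to $Df|_{E_i^f}$. The paper instead follows de la Llave's scheme: it constructs a smooth-along-leaves map $h_0$ with $h_0(V_i^L(x))=V_i^f(h(x))$, writes $h=\lim_n f^{-n}\circ h_0\circ L^n$ as a fixed point of a leafwise contraction, and then shows the leafwise derivatives of the approximants $h_n$ are uniformly bounded. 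That boundedness is exactly where the conformal metric from \cite{KS4} enters: it makes the scalar function $b(x)=\|Df|_{E_i^f(h(x))}\|^f$ a H\"older cocycle whose periodic data matches $a(x)=\|L|_{E_i^L}\|$, so the \emph{commutative} Liv\v{s}ic theorem gives a bounded coboundary and hence uniform bounds on $\|(Df^n|_{E_i^f})^{-1}\|\cdot\|L^n|_{E_i^L}\|$. Your ``distortion-control argument along $W^{u,f}$'' gestures at this but does not identify the contraction/approximation mechanism that makes $h$ Lipschitz (and bi-Lipschitz) along the leaves in the first place.

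Second, once $h$ is bi-Lipschitz along $V_i^L$, the a.e.\ derivative $D_{V^L_i}h$ is a \emph{measurable} transfer function between the H\"older cocycles $L|_{E_i^L}$ and $Df|_{E_i^f(h(\cdot))}$; upgrading it to a H\"older one is not a consequence of ``H\"older regularity of the bundle, the transfer function, and the leaves,'' as you write. Measurable transfer functions between H\"older cocycles are in general not continuous. The paper needs, and explicitly invokes, a non-commutative Liv\v{s}ic-type regularity theorem for cocycles cohomologous to conformal ones (via \cite{Sch}, \cite{PP}, or \cite{S}), which applies precisely because \cite{KS4} put both cocycles into the conformal subgroup. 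This is a second place where conformality is essential and where your proposal would need a real theorem rather than a regularity heuristic. Minor point: the $\lambda,-\lambda$ case does not require passing to $f^2$; a diagonalizable map with eigenvalues $\pm\lambda$ is already conformal in a suitable norm, so \cite{KS4} applies directly.
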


The proof of this proposition is given  in Section~\ref{smooth
along V proof} below. Since $V_1^L=W_{(1,1)}^L$,
Lemma~\ref{weak_flag_pres} implies that $h(V_1^L)=V_1^f$, and then
Proposition~\ref{smooth along V} yields  that $h$ is $C^{1+}$ along
$V_1^L$. This provides the base of the induction. The inductive step
is given by the following proposition.

\begin{proposition}\label{fast_to_fast}
Suppose that  $h(V_i^L)=V_i^f$, $1\le i\le k-1$, and $h$ is a
$C^{1+}$ diffeomorphism along $W^L_{(1,k-1)}$. Then $h(V_k^L)=V_k^f$
and  $h$ is a $C^{1+}$ diffeomorphism along $W_{(1,k)}^L$.
\end{proposition}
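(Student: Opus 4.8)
The plan is to run an induction-within-the-induction that parallels the structure already set up. Assuming $h(V_i^L)=V_i^f$ for $i\le k-1$ and that $h$ is $C^{1+}$ along $W^L_{(1,k-1)}$, the first goal is to show $h(V_k^L)=V_k^f$. Since $V_k^f = W^f_{(1,k)}\cap W^f_{(k,l)}$ and $h$ already carries the weak flag correctly by Lemma~\ref{weak_flag_pres}, so $h(W^L_{(1,k)})=W^f_{(1,k)}$, it suffices to produce the fast-foliation half, namely $h(W^L_{(k,l)})=W^f_{(k,l)}$. This is the analogue of Lemma~\ref{weak_flag_pres} for strong foliations, and unlike the weak case it is \emph{not} automatic for an arbitrary perturbation: it uses the periodic data hypothesis. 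The standard mechanism (as in \cite{G, KS3}) is to characterize $W^f_{(k,l)}$ dynamically — it is the set of points contracted toward $x$ under $f^{-n}$ faster than rate $\rho_{k-1}+\ep$ — and to transport this characterization through $h$ and through the linearization $L$, using that the Lyapunov exponents of $Df$ on periodic orbits match those of $L$, pliud Livsic-type or nonstationary-normal-forms arguments to rule out intermediate rates. Concretely, I expect to use that $h$ restricted to $W^L_{(1,k-1)}$-leaves is already a $C^{1+}$ diffeomorphism (the inductive hypothesis) together with the matching of periodic eigenvalues to pin down the rate of contraction of $Df^{-n}$ transverse to $W^f_{(1,k-1)}$, forcing $h(W^L_{(k,l)})\subseteq W^f_{(k,l)}$ and, by symmetry of the argument applied to $h^{-1}$, equality.

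Once $h(V_k^L)=V_k^f$ is established, Proposition~\ref{smooth along V} immediately gives that $h$ is a $C^{1+}$ diffeomorphism along $V_k^L$. It remains to upgrade this to $C^{1+}$ regularity along the whole slow flag leaf $W^L_{(1,k)}$. Here the natural tool is again Journ\'e's Lemma (Lemma~\ref{Journe}), applied \emph{inside} the $C^\infty$ manifold $W^f_{(1,k)}(x)$: this leaf carries two transverse continuous foliations with uniformly smooth leaves, namely $W^L_{(1,k-1)}$ and $V_k^L$ (on the source) mapping to $W^f_{(1,k-1)}$ and $V_k^f$ (on the target). By the inductive hypothesis $h$ is $C^{1+}$ along $W^L_{(1,k-1)}$, and by the previous paragraph $h$ is $C^{1+}$ along $V_k^L$; since $W^L_{(1,k)}$ is (at least) $C^{1+}$ and its leaves split as a ``product'' of these two sub-foliations, Journ\'e's Lemma yields that $h$ is $C^{1+}$ along $W^L_{(1,k)}$, completing the inductive step. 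One technical point to check is that the hypotheses of Journ\'e's Lemma are met with $h$ only a priori $C^0$ transversally: this is exactly the situation the lemma is designed for, so it poses no obstacle, but one must verify the uniformity of the smooth-leaf structure, which follows from the regularity statements quoted in Section~\ref{proofs} (the leaves of $V_k^f$ and $W^f_{(1,k-1)}$ are $C^{1+}$ and depend smoothly along $W^{u,f}$).

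The main obstacle is the first half: proving $h(W^L_{(k,l)})=W^f_{(k,l)}$, i.e. that the conjugacy respects the \emph{strong} (fast) foliation. This is where the periodic data assumption and the ``no three eigenvalues of equal modulus'' hypothesis really enter — the latter guarantees $\dim E_k^f\le 2$, which is what makes the conformality results of \cite{KS4} applicable to control the cocycle $Df|_{E_k^f}$ and hence rule out a ``leak'' of the strong foliation under $h$. I would expect to invoke: (i) matching of periodic Lyapunov data to fix the fiberwise expansion rate of $Df$ on $E_k^f$; (ii) the bunching/quasiconformality machinery of \cite{KS4} to show $Df|_{E_k^f}$ is uniformly quasiconformal, so that $W^f_{(k,l)}$ is the genuine strong-stable-type foliation inside $W^{u,f}$; and then (iii) the dynamical characterization of leaves through contraction rates, combined with $h\circ L = f\circ h$, to force $h$ to match these foliations. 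The remaining steps (Proposition~\ref{smooth along V} plus Journ\'e) are then essentially bookkeeping.
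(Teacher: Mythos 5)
Your second half (apply Proposition~\ref{smooth along V} once $h(V_k^L)=V_k^f$ is known, then Journ\'e inside $W^{f}_{(1,k)}$-leaves with the transverse pair $W^L_{(1,k-1)}$ and $V_k^L$) is exactly the paper's argument and is fine. The gap is in the first half, where the real content lives: you never actually prove $h(V_k^L)=V_k^f$. You reduce it to $h(W^L_{(k,l)})=W^f_{(k,l)}$ and then offer a wish list --- dynamical characterization by contraction rates, matching of periodic Lyapunov data, Liv\v{s}ic, normal forms, quasiconformality from \cite{KS4} --- but no argument connecting them. The rate-characterization approach in particular does not go through as stated: $W^f_{(k,l)}$ is indeed characterized by a contraction rate, but $h$ is only H\"older, so transporting that characterization through $h$ is precisely the thing that fails in general. (Compare Lemma~\ref{weak_flag_pres}: the weak flag \emph{is} preserved by the rate argument because there one only needs an upper bound on growth, which survives the H\"older distortion; for the strong flag one would need a lower bound, and that does not survive. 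If a simple rate argument worked, the strong flag would be preserved under any $C^1$-small perturbation, which it is not.) You also claim ``by symmetry applied to $h^{-1}$,'' but symmetry is exactly what is missing: the inductive hypothesis gives $C^{1+}$ regularity of $h$ along $W^L_{(1,k-1)}$, not along $W^f_{(k,l)}$.

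The paper's argument is quite different and more concrete. It runs a nested induction (Proposition~\ref{prop_for_ind}) that peels off one slow direction at a time \emph{inside} $W_{(i,k)}$-leaves rather than attacking the full strong foliation $W_{(k,l)}$. The key lemma (Lemma~\ref{translation}) is that the holonomy of $\F=h^{-1}(\Uf)$ between $\VL$-leaves is a restriction of a parallel translation. This uses the two ingredients you name but in a precise way: $C^{1+}$ regularity of $h$ along $V_i^L$ makes the holonomy $H_{a,b}=h^{-1}\circ\tilde H_{h(a),h(b)}\circ h$ differentiable, and conformality of $L|_{\VL}$ (which needs $\dim E_i^L\le 2$, the ``no three equal moduli'' hypothesis) bounds $\|L^n|_{\VL}\|\cdot\|L^{-n}|_{\VL}\|$, so that pulling back by $L^{-n}$ forces $DH_{a,b}=\Id$. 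One then finishes with density of $\UL$-leaves and a compactness ``ladder'' contradiction. Without this holonomy-is-translation step, there is no mechanism in your sketch to force $\F=\UL$; supplying it would essentially reproduce the paper's proof.
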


The proof of this proposition is given in
Section~\ref{fast_to_fast proof} (and also uses an inductive
argument). In the proof we only need to establish that
$h(V_k^L)=V_k^f$. Then Proposition~\ref{smooth along V} implies
the smoothness of $h$ along $V_k^L$, and the smoothness along
$W_{(1,k)}^L$ follows from the Journ\'e Lemma \ref{Journe}.

\vskip.4cm

\subsection{Proof of Proposition~\ref{smooth along V}}
\label{smooth along V proof}
In this subsection we  write
$$
  \VL \overset{\text{def}}{=}V_i^L,  \quad
  \Vf\overset{\text{def}}{=}V_i^f, \quad
  \EL\overset{\text{def}}{=}E_i^L=T\VL, \quad
  \Ef\overset{\text{def}}{=}E_i^f=T\Vf.
$$
The proof is an adaptation of arguments of de la Llave~\cite{L2}.
First we show that $h$ is Lipschitz along $\VL$ as a limit of smooth
maps with uniformly bounded derivatives. Then we prove that the
measurable derivative of $h$ along $\VL$ is actually H\"older
continuous. Both steps use Liv\v{s}ic Theorem for commutative and
noncommutative cocycles and rely on conformality of $L$ and $f$
along $\VL$ and $\Vf$ respectively. Conformality of  $f$ along $\Vf$
is crucial and to establish it we use a result from \cite{KS4}.

First we construct a map $h_0$ close to $h$ and
satisfying  the following conditions:
\begin{enumerate}
\item $h_0(\VL)=\Vf$, moreover, $h_0(\VL(x))=\Vf(h(x))$ for all $x$ in $\T^d$;
\item $\sup_{x\in\mathbb T^d} d_{\Vf}(h_0(x),h(x))<+\infty$,
where $d_{\Vf}$  is the distance along the leaves;
\item $h_0$ is $C^{1+}$ diffeomorphism along the leaves of $\VL$.
\end{enumerate}
\vskip.1cm

\noindent Let $\bar{\V}^L$ be the linear integral
foliation of
$E^{s,L}\oplus E_1^L\oplus\ldots\oplus E_{i-1}^L\oplus
E_{i+1}^L\oplus\ldots\oplus E_l^L$.
We define the map
$h_0$ by intersecting  local leaves:
$$
h_0(x)=\V^{f,loc}(h(x))\cap\bar{\V}^{L, loc}(x).
$$
The map is well-defined and satisfies (2) since $h$ is close
to the identity.
Condition (1) holds since $h(\VL(x))=\Vf(h(x))$ by the assumption,
and (3) is satisfied since for any $x$ the leaf $\Vf (h(x))$ is
$C^{1+}$ and $C^1$ close to $\VL(x)$.

It follows easily as in \cite[Theorem~2.1]{L2} that
$$
h=\lim_{n\to\infty}h_n, \; \text{ where }\;
h_n=f^{-n}\circ h_0\circ L^n.
$$
Indeed, let us endow the space of maps satisfying (1) and (2) with
the metric  $d(k_1, k_2)=\sup_x d_{\Vf}(k_1(x), k_2(x))$. Then,
since $f^{-1}$ contracts the leaves of $\Vf$,  it follows that the
map $k \mapsto f^{-1}\circ k \circ L$ is a contraction with the
fixed point $h$.

\vskip.2cm

Now we prove that $h$ is Lipschitz along $\Vf$.
For this it suffices to show that the derivatives
of the maps $h_n$ along $\VL$ are uniformly bounded.
We estimate
$$
\begin{aligned}
 \|D_{\VL(x)}h_n\|
 & \le \| Df^{-n}|_{\Ef(h_0(L^nx))}\|\cdot \|D_{\VL(L^nx)}h_0\| \cdot \|L^n|_{\EL(x)} \| \\
 & = \| \left( Df^n|_{\Ef(f^{-n}(h_0(L^nx)))} \right)^{-1}\|\cdot \|D_{\VL(L^nx)}h_0\| \cdot \|L^n|_{\EL} \| \\
 & \le \| \left( Df^n|_{\Ef(h_n(x))} \right)^{-1}\| \cdot \|L^n|_{\EL} \|
 \cdot  \sup_z \|D_{\VL(z)}h_0\|. \\
 \end{aligned}
 $$
Since  $D_{\VL}h_0$ is continuous on $\T^d$, the supremum
on the right is finite. Now we show that the product
$\| (Df^{n}|_{\Ef(y)})^{-1}\|\cdot \|L^n|_{\EL} \|$ is uniformly bounded
in $y$ and $n$.

\vskip.1cm

We concentrate on the case when $\Vf$ is two-dimensional. The
one-dimensional case is similar except for conformality of $L$ along
$\VL$ and of $f$ along $\Vf$ is trivial. Since $L$ is irreducible it
is diagonalizable over $\C$. Therefore, as the eigenvalues of
$L|_{\EL}$ have the same modulus, $L|_{\EL}$ is conformal with
respect to some norm on $\EL$. We can assume that our background
norm $\|\cdot\|$ is chosen so that $L|_{\EL}$ is conformal.

By the assumption of the theorem, $D_pf^n$ is conjugate to $L^n$
whenever $f^np=p$. It follows that $D_pf^n|_{\Ef(p)}$ is also
diagonalizable over $\C$ and has  eigenvalues of the same modulus.
To obtain conformality of $Df|_{\Ef}$, we apply  the following result
to vector bundle $\E=\Ef$ and cocycle $F=Df|_{\Ef}$.

\vskip.2cm

\noindent \cite[Theorem 1.3]{KS4}
{\it  Let $\E$ be a H\"older continuous linear bundle with
two-dimensional fibers over a
compact Riemannian manifold $\M$.
Let $F: \E \to \E$ be a H\"older continuous linear cocycle over
a transitive Anosov diffeomorphism $f:\M\to\M$.
 If for each periodic point $p\in \M$, the return
map $F^n_p:\E_p\to \E_p$ is diagonalizable over $\C$ and its
eigenvalues are equal in modulus, then $F$ is conformal with respect
to a H\"older continuous Riemannian metric on $\E$.
}
\vskip.2cm

We denote by $\| \cdot \|_x^f$ the norm induced by the metric on
$\Ef(x)$ given by the theorem. The conformality of $Df|_{\Ef}$ with
respect to this norm means that
$$
    \| Df(v) \|_ {f(x)}^f = c(x) \cdot \|v\|_x^f \quad
    \text{for any } x\in \T^d \text{ and }v\in \Ef(x).
$$
Clearly,  $c(x)=\|Df|_{\Ef(x)} \|^f$, the norm of $Df:(\Ef(x),
\|\cdot\|_x^f)\to (\Ef(f(x)), \|\cdot\|_{f(x)}^f)$.

We  set
$$
   a(x) =  \|L|_{\EL(x)} \| =  \|L|_{\EL}\| \quad\text{and} \quad
   b(x) =  c(h(x))=\|Df|_{\Ef(h(x))} \|^f.
$$
The function $a(x)$ is constant in our context, however we will
keep the variable for consistency with $b(x)$.
Since $L$ is conformal on $\EL$, $\,a(x)$  satisfies
$$
a_n(x)\overset{\text{def}}{=}  a(x)a(Lx) \cdots  a(L^{n-1}x) =
 \|L^n|_{\EL} \|.
$$
The function $b(x)$ is H\"older continuous, and using the relation
$f^m\circ h =h\circ L^m$
and the conformality of $Df|_{\Ef}$ we obtain
$$
\begin{aligned}
& b_n(x)\overset{\text{def}}{=} b(x)b(Lx) \cdots  b(L^{n-1}x) = \\
& = \|Df|_{\Ef(h(x))} \|^f \cdot \|Df|_{\Ef(h(Lx))} \|^f\cdots
\|Df|_{\Ef(h(L^{n-1}x))} \|^f  =  \|Df^n|_{\Ef(h(x))} \|^f.
\end{aligned}
$$
We claim that the functions $a$ and $b$ are cohomologous,
i.e. the exists a continuous function $\phi:\T^d\to\R_+$ such that
 $$
 a(x)/b(x)=\phi(Lx)/\phi(x).
 $$
 This follows from the Liv\v{s}ic Theorem~\cite{Liv}, \cite[Theorem 19.2.1]{KH} once we show
 that $a_n(p)=b_n(p)$ for any
periodic point $p=L^np$. We note that $b_n(p)= \|Df^{n}|_{\Ef(h(p))}
\|^f$ is the modulus of the eigenvalues of
$Df^{n}|_{\Ef(h(p))}$ since this linear map is conformal with
respect to norm $\| \cdot \|^f_{h(p)}$. A similar statement holds
for $a_n(p)$ and $L^{n}|_{\EL}$. The coincidence of the periodic
data for $f$ and $L$ implies that indeed $a_n(p) = b_n(p)$ and hence
the functions $a$ and $b$ are cohomologous.  Using conformality  we
obtain that
$$
\begin{aligned}
\|L^n|_{\EL} \| \cdot \| (Df^{n}|_{\Ef(h(x))})^{-1}\|^f &=
\|L^n|_{\EL} \| \cdot \left( \| Df^{n}|_{\Ef(h(x))}\|^f\right)^{-1} =\\
&= a_n(x) / b_n(x) =  \phi(L^nx) /\phi(x)
\end{aligned}
$$
is uniformly bounded since $\phi$ is continuous on $\T^d$. Since the
norm $\| \cdot \|^f$ is  equivalent to $\|\cdot\|$ we obtain that
$\| (Df^{n}|_{\Ef(y)})^{-1}\|\cdot \|L^n|_{\EL} \|$  is uniformly
 bounded in $y$ and $n$. We conclude that $\|D_{\VL(x)}h_n\| $
 is uniformly bounded in $x$ and $n$, and hence $h$ is
 Lipschitz along $\Vf$.

A similar argument shows that
$\|(D_{\VL(x)}h_n)^{-1}\|$ is uniformly bounded and hence
$h$ is bi-Lipschitz along $\Vf$. In particular, $D_{\VL}h$ exists and is
invertible almost everywhere.

\vskip.2cm

Differentiating $f\circ h= h \circ L$ along $\VL$ on
a set of full Lebesgue measure we obtain
$$
 Df|_{\Ef(h(x))} \circ D_{\VL(x)}h = D_{\VL(Lx)} h \circ L |_{\EL(x)} ,
$$
i.e., the cocycles $Df|_{\Ef(h(x))}$ and $L |_{\EL(x)}$ are
cohomologous with transfer function $D_{\VL(x)}h$. The bundle $\Ef$
is trivial since it is close to the trivial bundle $\EL$. Therefore,
$Df|_{\Ef(h(x))}$ and $L |_{\EL(x)}$ can be viewed as  H\"older
continuous $GL(2,\R)$-valued cocycles over the automorphism $L$.
Moreover, the existence of conformal metrics implies that they are
cohomologous to cocycles with values in the conformal subgroup. We
remark that in general measurable transfer functions are not
necessarily continuous \cite[Section 9]{PW}. However, for conformal
cocycles the measurable transfer function coincides almost
everywhere with a H\"older continuous one. This follows from
\cite[Theorem 6.1]{Sch} or from \cite[Theorem 1]{PP} after reducing
cocycles to orthogonal ones by factoring out the norms. See also
\cite{S} for stronger results on $GL(2,\R)$-valued cocycles. We
conclude that $D_{\VL(x)}h$ is H\"older continuous, and hence  $h$
is a $C^{1+}$ diffeomorphism along $\VL$. $\QED$


\subsection{Proof of Proposition~\ref{fast_to_fast}}
\label{fast_to_fast proof}
The proof  is based on the following proposition.

\begin{proposition} \label{prop_for_ind}
Assume that $h(W_{(i,k)}^L)=W_{(i,k)}^f$, $h(V_i^L)=V_i^f$ and $h$
is a $C^{1+}$ diffeomorphism along $V_i^L$. Then $h(W_{(i+1,k)}^L)=W_{(i+1,k)}^L$.
\end{proposition}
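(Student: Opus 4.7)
The plan is to prove $h(W_{(i+1,k)}^L)=W_{(i+1,k)}^f$ by a dynamical characterization: within the leaves of $W_{(i,k)}$, the fast subfoliation $W_{(i+1,k)}$ is distinguished from the slow direction $V_i$ by a strict spectral gap between the backward contraction rates $\rho_i^{-n}$ and $\rho_{i+1}^{-n}$, and this dichotomy is preserved by a conjugacy that is sufficiently H\"older. Since by hypothesis $h(W_{(i,k)}^L)=W_{(i,k)}^f$, it suffices to show that for every $x$ and every $y\in W_{(i+1,k)}^L(x)$ the image $h(y)$ lies in $W_{(i+1,k)}^f(h(x))$, after which the reverse inclusion follows by applying the argument to $h^{-1},f^{-1},L^{-1}$.

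First I would verify the characterization
$$ W_{(i+1,k)}^L(x)=\{\,y\in W_{(i,k)}^L(x):\, \limsup_{n\to\infty} n^{-1}\log d(L^{-n}x,L^{-n}y)<-\log\rho_i\,\}, $$
which follows at once from the linear splitting $E_{(i,k)}^L=E_i^L\oplus E_{(i+1,k)}^L$: the $V_i^L$-component contracts under $L^{-n}$ at rate exactly $\rho_i^{-n}$, while any purely $E_{(i+1,k)}^L$ direction contracts at rate at most $\rho_{i+1}^{-n}<\rho_i^{-n}$. The analogous description for $W_{(i+1,k)}^f(h(x))$ inside $W_{(i,k)}^f(h(x))$ uses the hypothesis $h(V_i^L)=V_i^f$ together with the conformality of $Df|_{E_i^f}$ established in the proof of Proposition \ref{smooth along V} via \cite[Theorem~1.3]{KS4} and a Liv\v{s}ic argument based on the matching of periodic data. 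The local product structure of $W_{(i,k)}^f$ by the subfoliations $V_i^f$ and $W_{(i+1,k)}^f$ then forces the $V_i^f$-projection of any $z\in W_{(i,k)}^f(h(x))\setminus W_{(i+1,k)}^f(h(x))$ to contract under $f^{-n}$ at rate exactly $\rho_i^{-n}$ up to a bounded Liv\v{s}ic transfer factor, dominating the $W_{(i+1,k)}^f$-contribution of order $\rho_{i+1}^{-n}$; hence $W_{(i+1,k)}^f(h(x))$ is exactly the set $\{z:\limsup_n n^{-1}\log d(f^{-n}z,f^{-n}h(x))<-\log\rho_i\}$.

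Finally, for $y\in W_{(i+1,k)}^L(x)$ the semi-conjugacy $h\circ L^{-n}=f^{-n}\circ h$ combined with the bi-H\"older continuity of $h$ with exponent $\alpha$ yields
$$ d(f^{-n}h(y),f^{-n}h(x))=d(h(L^{-n}y),h(L^{-n}x))\le C\,d(L^{-n}x,L^{-n}y)^{\alpha}, $$
so the $f$-side limsup is at most $-\alpha\log\rho_{i+1}$. This is strictly less than $-\log\rho_i$ precisely when $\alpha>\log\rho_i/\log\rho_{i+1}$, a bound strictly smaller than $1$. The quantitative heart of the argument is that for $f$ sufficiently $C^1$-close to $L$ the H\"older exponent $\alpha$ of the Anosov conjugacy can be taken arbitrarily close to $1$, hence above $\log\rho_i/\log\rho_{i+1}$, and the characterization then places $h(y)$ in $W_{(i+1,k)}^f(h(x))$. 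The main obstacle is precisely this comparison between the H\"older exponent of $h$ and the log-spectral ratio $\log\rho_i/\log\rho_{i+1}$: the whole scheme rests on a quantitative gap-versus-regularity tradeoff, guaranteed in the $C^1$-small regime, which would otherwise force one to exploit the $C^{1+}$-regularity of $h$ along $V_i^L$ to sharpen the decay estimate for the $V_i^f$-projection of $f^{-n}h(y)$ via Lipschitz control along $V_i^L$-leaves rather than relying on the raw H\"older exponent of $h$.
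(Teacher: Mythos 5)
There is a genuine gap, and it sits exactly where you flag it as the ``quantitative heart.'' You assert that for $f$ sufficiently $C^1$-close to $L$ the H\"older exponent $\alpha$ of the conjugacy $h$ can be taken arbitrarily close to $1$, and in particular above $\log\rho_i/\log\rho_{i+1}$. This is false when $L$ is not conformal on the relevant leaves. The standard structural-stability estimate for $h$ restricted to $W_{(i,k)}^L$-leaves is governed by the spread of log-moduli inside the leaf: choosing $n$ so that $d(L^n x, L^n y)\approx 1$ and then contracting back by $f^{-n}$, the exponent one obtains tends, as $f\to L$ in $C^1$, only to $\log\rho_i/\log\rho_k$, not to $1$. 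Since $k>i$, this limiting exponent satisfies $\log\rho_i/\log\rho_k\le\log\rho_i/\log\rho_{i+1}$ with strict inequality once $k>i+1$, so the strict inequality $\alpha>\log\rho_i/\log\rho_{i+1}$ that your characterization requires cannot be guaranteed, and in the borderline case $k=i+1$ you get at best equality, again not enough. Improving the exponent beyond this threshold is essentially equivalent to the regularity of $h$ one is trying to establish, so this cannot serve as an input. Your closing remark, that one might instead exploit the $C^{1+}$-regularity of $h$ along $V_i^L$, also does not fill the gap: the inductive hypotheses give regularity of $h$ only along $V_i^L$ (and along $W_{(1,k-1)}^L$), whereas your rate estimate needs control of $h$ in the transverse directions contributing to $f^{-n}h(y)$ inside $W_{(i,k)}^f$, precisely where no regularity is yet known.

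The paper avoids any comparison between a H\"older exponent of $h$ and the spectral ratio. It sets $\F:=h^{-1}(W_{(i+1,k)}^f)$, which is a continuous subfoliation of $W_{(i,k)}^L$ topologically transverse to $V_i^L$, and shows that the $\F$-holonomies between $V_i^L$-leaves are restrictions of parallel translations. This uses exactly the hypothesis that $h$ is $C^{1+}$ along $V_i^L$ (so the holonomies are $C^{1+}$), plus conformality of $L$ on $E_i^L$ to make the renormalization $L^n\circ H_{a_n,b_n}\circ L^{-n}$ force $DH_{a,b}=\mathrm{Id}$. It then argues by contradiction: if at a fixed point $a$ of $L$ the unit ball of $W_{(i+1,k)}^L(a)$ were not contained in $\F(a)$, density of $V_i^L$-leaves (from irreducibility) lets one iterate the translation to build, inside $\F(a)$, a ``ladder'' of points $x_n$ whose $V_i^L$-distance from $W_{(i+1,k)}^L(a)$ grows linearly, and pulling back by $L^{-N(n)}$ makes this distance blow up inside a compact ball, a contradiction. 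The only input from $h$ is qualitative continuity plus the leafwise $C^{1+}$ regularity from the inductive hypothesis; no lower bound on the global H\"older exponent is needed. That holonomy-and-density mechanism is the missing idea your rate-based characterization does not supply.
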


We apply Proposition \ref{prop_for_ind}  inductively
with $i=1,\dots, k-1$.
At every step the assumption of the proposition is
fulfilled due to the assumptions in the Proposition~\ref{fast_to_fast}
and the conclusion of Proposition  \ref{prop_for_ind}
at the previous step. We obtain the conclusion of
Proposition~\ref{fast_to_fast}
at the final step when $W_{(i+1,k)}^L=W_{(k,k)}^L=V_k^L$.
\vskip.1cm

It remains to prove Proposition  \ref{prop_for_ind}. We will use the
following simplified notation:
\begin{multline*}
\qquad (\WL,\VL,\UL)=(W_{(i,k)}^L, V_i^L, W_{(i+1,k)}^L), \\
\qquad\shoveleft{(\Wf,\Vf,\Uf)=(W_{(i,k)}^f, V_i^f, W_{(i+1,k)}^f).
\hfill}
\end{multline*}
We note that $\VL$ and $\UL$ are slow and fast sub-foliations
of $\WL$ respectively. Similarly, $\Vf$ and $\Uf$ are slow and fast sub-foliations
in $\Wf$. We also note that $\,\Uf=\Wf \cap  W_{(i+1,l)}^f$. The
foliation $W_{(i+1,l)}^f$ is a fast part of the unstable foliation and
hence is $\Ci$ inside the unstable leaves, see for example
\cite[Proposition~3.9]{KS2}. Therefore, the foliation $\,\Uf$ is $C^{1+}$
inside the leaves of $\Wf$ and the holonomies between
the leaves of $\Vf$ along $\Uf$  are uniformly $C^{1+}$.

Let $\F=h^{-1}(\Uf)$. Then $\F$ is a continuous foliation with
continuous leaves that subfoliates $\WL$.
We need to show that $\F=\UL$. Since $\VL=h^{-1}(\Vf)$, $\F$ is
topologically transverse to $\VL$, that is, any  leaf of $\F$
and and any leaf of $\VL$ in the same leaf of $\WL$ intersect at
exactly one point.
 First we prove an auxiliary statement
that gives some insight into relative structure of $\F$ and $\VL$.

For any point $a\in\Td$ and any $b\in\F(a)$ we denote by $H_{a,b} :
\VL(a) \to \VL(b)$ the holonomy along the  foliation $\F$, i.e. for
every $x\in\VL(a)$ we define $H_{a,b}(x)$ to be the unique point of
intersection $\F(x)\cap\VL(b)$.

\begin{lemma}\label{translation}
For any point $a\in\Td$ and any $b\in\F(a)$ the holonomy map
$H_{a,b}$  is a restriction to $\VL(a)$ of a parallel translation inside $\WL$.
\end{lemma}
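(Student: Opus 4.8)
The plan is to prove that the derivative of $H_{a,b}$ along $\VL$ is everywhere the identity. Since $H_{a,b}$ carries the affine leaf $\VL(a)$ into the affine leaf $\VL(b)$, and both of these lie in the single affine leaf $\WL(a)$, a map between them with constant derivative $\Id$ is the restriction of a translation by a vector in $T\WL$, which is precisely a parallel translation inside $\WL$; moreover, by the cocycle relation for $\F$-holonomies along the connected leaves of $\F$ it suffices to treat $b$ in a small neighborhood of $a$ in $\F(a)$. The first point to record is that $H_{a,b}$ is a uniformly $C^{1+}$ diffeomorphism $\VL(a)\to\VL(b)$: because $h$ conjugates $\F=h^{-1}(\Uf)$, $\VL$, $\WL$ to $\Uf$, $\Vf$, $\Wf$, one has $H_{a,b}=h^{-1}\circ H^f_{h(a),h(b)}\circ h$ on $\VL(a)$, where $H^f_{h(a),h(b)}\colon\Vf(h(a))\to\Vf(h(b))$ is the $\Uf$-holonomy; since $h$ is $C^{1+}$ along $\VL$ and $h^{-1}$ is $C^{1+}$ along $\Vf$ (hypothesis of Proposition~\ref{prop_for_ind}), and the $\Uf$-holonomies between leaves of $\Vf$ are uniformly $C^{1+}$, the composition is a $C^{1+}$ diffeomorphism with norm bounded uniformly on the compact manifold $\T^d$.

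Next I would exploit the dynamics. As $\Uf$ is $f$-invariant and $h\circ L=f\circ h$, the foliation $\F$ is $L$-invariant, while $\VL$ and $\WL$ are $L$-invariant being linear; hence $L\circ H_{a,b}=H_{La,Lb}\circ L$ on $\VL(a)$, and iterating, $H_{a,b}=L^{n}\circ H_{L^{-n}a,L^{-n}b}\circ L^{-n}$ for every $n\ge 0$. Differentiating along $\VL$, identifying each affine leaf with $\EL$ by translation, and recalling that the background norm has been chosen so that $L|_{\EL}$ is conformal, say $L|_{\EL}=\rho_i\,O$ with $O$ orthogonal, we get
$$
 D_xH_{a,b}=(L|_{\EL})^{n}\,D_{L^{-n}x}H_{L^{-n}a,L^{-n}b}\,(L|_{\EL})^{-n}=O^{n}\,D_{L^{-n}x}H_{L^{-n}a,L^{-n}b}\,O^{-n},
$$
so that $\|D_xH_{a,b}-\Id\|=\|D_{L^{-n}x}H_{L^{-n}a,L^{-n}b}-\Id\|$ for all $n$. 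It therefore suffices to show that $\|H_{L^{-n}a,L^{-n}b}-\Id\|_{C^1}\to 0$ uniformly as $n\to\infty$.

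To see this, recall that $\Uf$ is a fast part of the unstable foliation, so $f^{-1}$ contracts it uniformly; since $h(L^{-n}a)=f^{-n}h(a)$ and $h(L^{-n}b)=f^{-n}h(b)$ lie on one $\Uf$-leaf (because $h(b)\in\Uf(h(a))$), we have $d_{\Uf}(h(L^{-n}a),h(L^{-n}b))\to 0$. Consequently $H^f_{h(L^{-n}a),h(L^{-n}b)}$ displaces points by an amount tending to $0$ uniformly, and composing with the uniformly continuous homeomorphisms $h^{\pm1}$ gives $\|H_{L^{-n}a,L^{-n}b}-\Id\|_{C^0}\to 0$. Combining this with the uniform $C^{1+\alpha}$ bound from the first step and the interpolation inequality $\|g-\Id\|_{C^1}\le C\,\|g-\Id\|_{C^0}^{\theta}\,\|g-\Id\|_{C^{1+\alpha}}^{1-\theta}$ (applied along the leaves, with $0<\theta<1$) yields $\|H_{L^{-n}a,L^{-n}b}-\Id\|_{C^1}\to 0$, as needed. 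Feeding this back into the displayed identity forces $D_xH_{a,b}=\Id$ for every $x\in\VL(a)$; hence $H_{a,b}$ is a translation, and its translation vector lies in $T\WL$ since both $\VL(a)$ and $\VL(b)$ are contained in $\WL(a)$. This is exactly the assertion of Lemma~\ref{translation}.

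The main obstacle is the last estimate: extracting $C^1$-convergence of $H_{L^{-n}a,L^{-n}b}$ to the identity — rather than merely $C^0$-convergence — from the uniform $C^{1+}$ regularity of the $\Uf$-holonomies together with the contraction of $\Uf$ by $f^{-1}$. The interpolation inequality handles this cleanly, but one must keep every constant uniform in the base point, which is where compactness of $\T^d$, the uniformity of the $\Uf$-holonomies, and the global bound on $Dh|_{\VL}$ all enter.
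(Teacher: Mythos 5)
Your proof is correct and follows the same overall strategy as the paper: transfer the $\F$-holonomy via $h$ to the uniformly $C^{1+}$ $\Uf$-holonomy, use the dynamical identity $H_{a,b}=L^{n}\circ H_{L^{-n}a,L^{-n}b}\circ L^{-n}$, exploit conformality of $L|_{\EL}$ to cancel the growth in the conjugating factor, and send $n\to\infty$. The one genuine difference is the final convergence step. The paper argues pointwise: it sets $\Delta_n=D_{a_n}H_{a_n,b_n}-\Id$, notes that $D_{h(a_n)}\tilde H_{h(a_n),h(b_n)}\to\Id$ because $\mathrm{dist}(a_n,b_n)\to0$, and then passes to a subsequence with $a_n,b_n\to c$ so that $D_{a_n}h,D_{b_n}h\to D_c h$ by continuity of the derivative of $h$ along $\VL$, concluding $\Delta_n\to0$; since the choice of base point $a$ was arbitrary, this gives $D_xH_{a,b}=\Id$ at every $x$. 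You instead establish uniform $C^1$-convergence $\|H_{a_n,b_n}-\Id\|_{C^1}\to0$ directly, by combining the $C^0$-contraction coming from $d_{\Uf}(f^{-n}h(a),f^{-n}h(b))\to0$ with the uniform $C^{1+\alpha}$ bound on the holonomies through a Landau--Kolmogorov type interpolation inequality. Both are valid; your route is slightly heavier technically (one must identify the parallel leaves $\VL(a_n)$, $\VL(b_n)$ with $\EL$ by translation before applying the interpolation, and the uniform $C^0$ bound over the full dense leaf rests on continuity of the foliation $\F$ on the compact torus), but it buys the uniform statement and so dispenses with the subsequence extraction and the pointwise-to-everywhere step. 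It is worth noting that the uniform $C^{1+\alpha}$ bound on $H_{a_n,b_n}$ you invoke requires that $h^{-1}$ be $C^{1+}$ along $\Vf$ with uniformly bounded H\"older norm, which indeed follows from the hypothesis that $h$ is a $C^{1+}$ diffeomorphism along $V_i^L$, but is used silently here.
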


\begin{proof}

For any point $c\in\Td$ and any $d\in\Uf(c)$ we denote by
$\tilde H_{c,d} : \Vf(c) \to \Vf(d)$ the holonomy along the foliation $\Uf$,
which is $C^{1+}$ as we noted above.
Since $\F=h^{-1}(\Uf)$ and $h(\VL)=\Vf$ we have
$$
H_{a,b} = h^{-1} \circ \tilde H_{h(a),h(b)} \circ h.
$$
 Since $h$ is a $C^{1+}$ diffeomorphism along $\VL$
we conclude that $H_{a,b}$ is $C^{1+}$.

\begin{figure}[htbp]
\begin{center}
\includegraphics{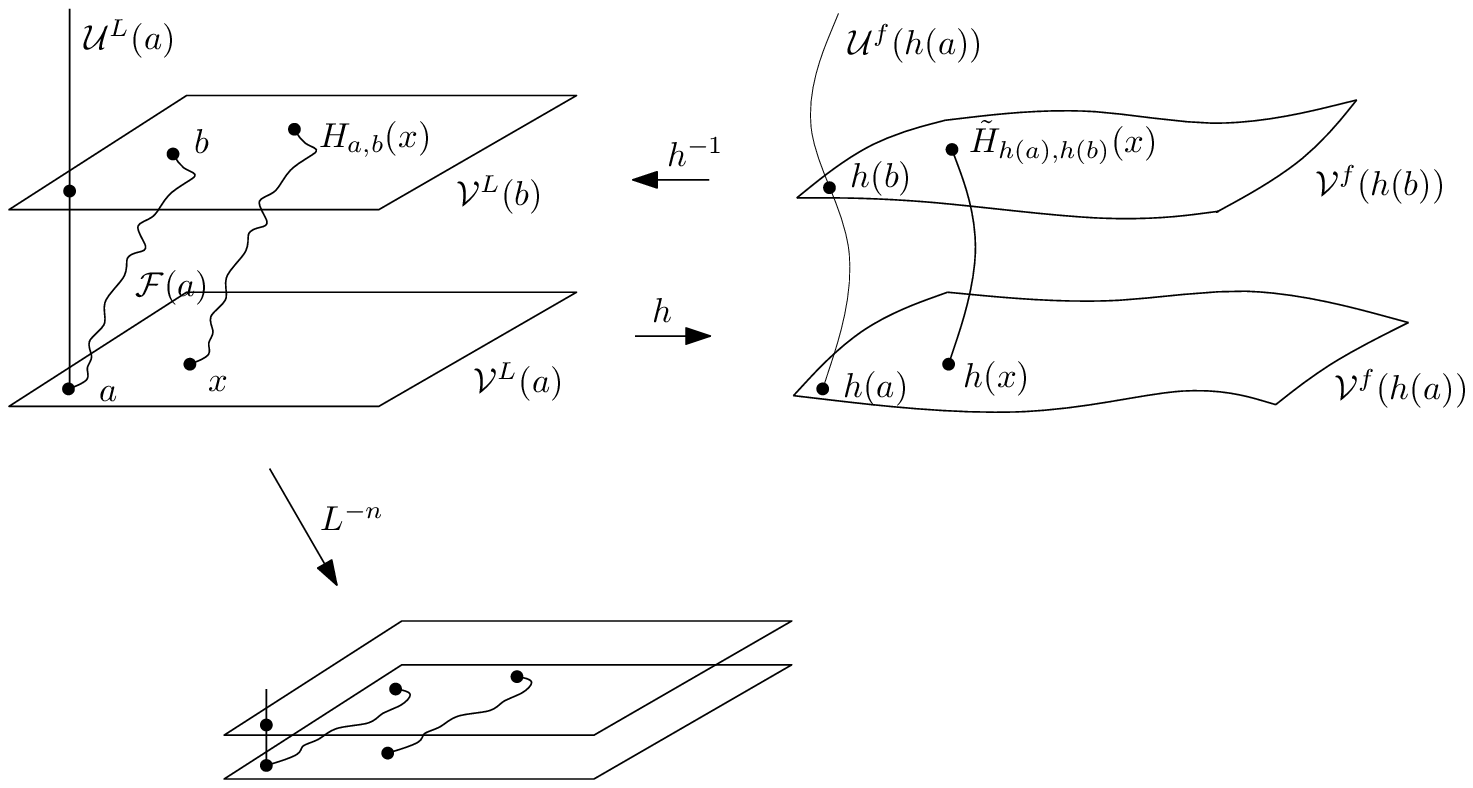}
\end{center}
\label{pic1} \caption{}
\end{figure}

To show that $H_{a,b}$ is the restriction of a parallel translation,
we prove that the differential $DH_{a,b}=\Id$.
We apply  $L^{-n}$ and denote $a_n=L^{-n}(a)$,
$b_n=L^{-n}(b)$. Since $\F=h^{-1}(\Uf)$  and
$f$ preserves the foliation $\Uf$,
$L$ preserves  $\F$ and we can write
$$
H_{a,b} = L^n\circ  H_{a_n,\,b_n} \circ L^{-n}.
$$
Differentiating and denoting  $D_{a_n}H_{a_n,b_n}=\Id+\Delta_n$
we obtain
$$
D_a H_{a,b} = L^n|_{\VL} \circ  D_{a_n}H_{a_n,b_n} \circ L^{-n}|_{\VL}
=  \Id + L^n|_{\VL}\circ \Delta_n \circ L^{-n}|_{\VL}.
$$
Since $L$ is conformal on $\VL$ with respect to some inner product,
$$
   \|L^n|_{\VL}\| \cdot\| L^{-n}|_{\VL}\| \le C  \; \text{ and hence }\;
   \|L^n|_{\VL}\circ \Delta_n \circ L^{-n}|_{\VL}\|\le C\|\Delta_n\|
   \;\text{ for all }n.
$$
It remains to show that $\|\Delta_n\| \to 0 $. This follows easily by
differentiating the equation
$$
  H_{a_n,b_n} = h^{-1} \circ \tilde H_{h(a_n),\,h(b_n)} \circ h.
$$
Indeed, we obtain
$$
  D_{a_n}H_{a_n,b_n} = (D_{b_n}h)^{-1} \circ
  D_{h(a_n)}\tilde H_{h(a_n),\,h(b_n)} \circ D_{a_n} h.
$$
Since $\dist(a_n, b_n)\to 0$ as $n\to \infty$ we obtain that
$D_{h(a_n)}\tilde H_{h(a_n),\,h(b_n)} \to \Id$. 
Also, by choosing a subsequence if necessary, we can assume that
$\lim a_n=\lim b_n =c \in \T^d$. Thus 
$D_{a_n}h \to D_{c}h$, $ D_{b_n}h \to D_{c}h$ and hence $\Delta_n=D_{a_n}H_{a_n,b_n} - \Id \to0$ as $n\to \infty$.
\end{proof}

Now let $a$ be a fixed point of $L$ and  let $B$ be the unit ball
in $\UL(a)$ centered at~$a$. If $B\subset\F(a)$, then $\UL(a)=\F(a)$
by invariance under $L$. Since $L$ is irreducible and
$\UL$ is invariant, the leaf $\UL(a)$  is dense in $\T^d$.
It follows that the set of points $x$ such that
$\UL(x)=\F(x)$ is dense in $\T^d$ and hence $\UL=\F$.
Therefore, it suffices to show that $B\subset\F(a)$.

We argue by contradiction. Assume that there is $z_1\in B$ such that
$z_1\notin\F(a)$. Let
$$
x_1=\VL(z_1)\cap\F(a).
$$
Since $\VL$ has dense leaves we can choose a sequence
$\{b_n,n\ge1\}\subset\VL(a)$ so that $b_n\to x_1$ as $n\to\infty$. Let
$$
y_n=H_{a,x_1}(b_n),
$$
where $H_{a,x_1}$ is the holonomy map along $\F$ from $\VL(a)$ to $\VL(x_1)$.
Continuity of $\F$ implies that the sequence $y_n$ converges to a point
$x_2  \in \F(a)$. Moreover, Lemma~\ref{translation} implies that $\{x_1,x_2\}$
is a parallel  translation of $\{a,x_1\}$.

\begin{figure}[htbp]
\begin{center}

\includegraphics{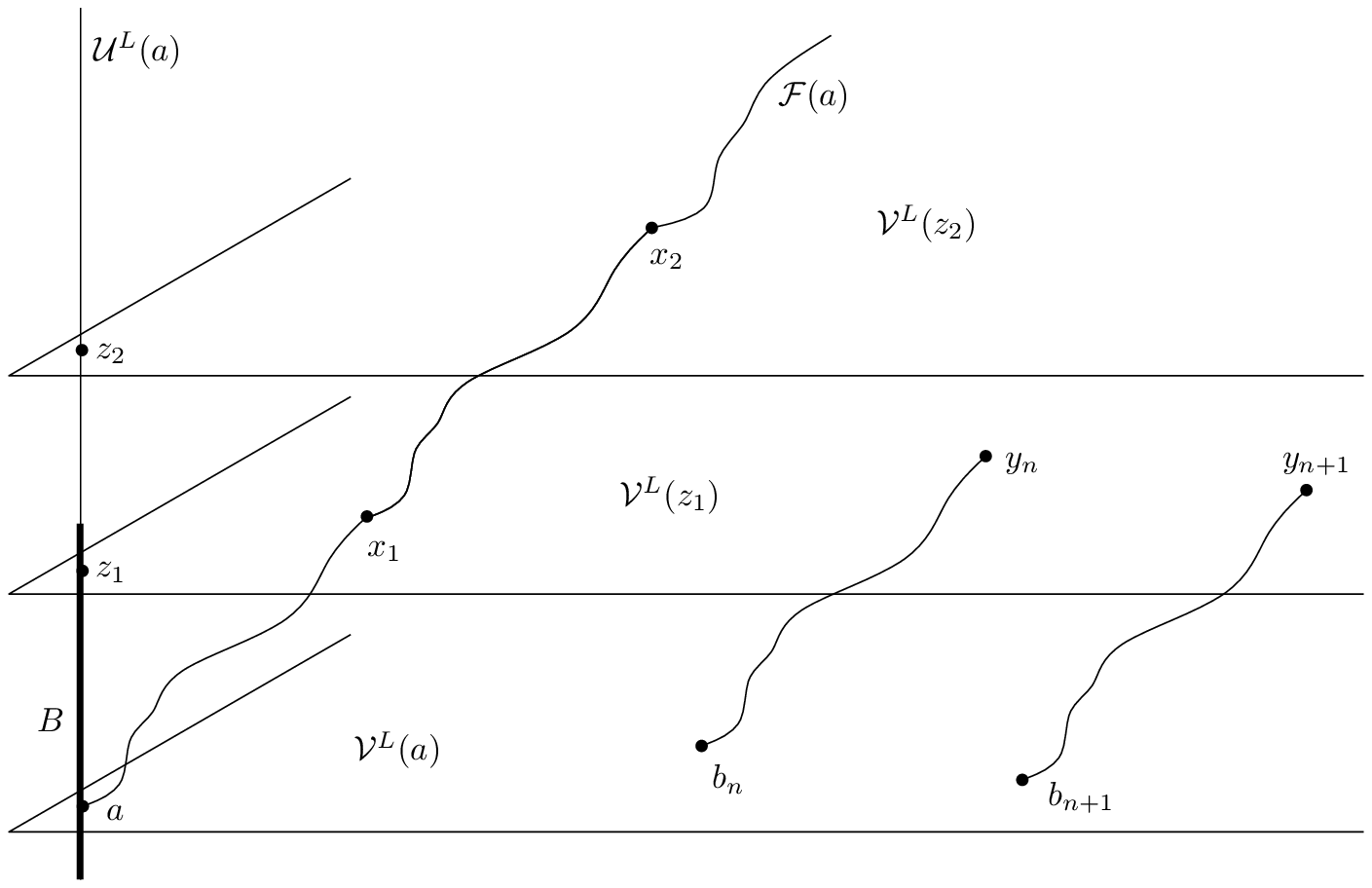}
\end{center}
\label{pic2} \caption{}
\end{figure}

We continue this procedure inductively to construct the sequence
$\{x_n,n\ge 1\}\subset\F(a)$. Let
$$
z_n=\VL(x_n)\cap\UL(a).
$$
Then according to the construction
\begin{equation}\label{ladder}
 d_{\UL}(z_n,a)=n\cdot d_{\UL}(z_1,a) \quad\text{and}\quad
d_{\VL}(x_n,z_n)=n\cdot d_{\VL}(x_1,z_1).
\end{equation}

For every $n$ we take $N(n)$ to be the smallest integer such that
$L^{-N(n)}(z_n)\in B$. Since $L^{-1}$  contracts $\UL$ exponentially
faster than $\VL$, equation \eqref{ladder}
implies that
$$
d_{\VL}(L^{-N(n)}(x_n), L^{-N(n)}(z_n))\to\infty \quad \text{ as } \; n\to\infty.
$$
This contradicts an obvious bound due to compactness of $B$:
$$
\max_{z\in B}d_{\VL}(z,\VL(z)\cap\F(a))<\infty.
$$
Thus we conclude that $\F=\UL$.
$\QED$


\section{Genericity}

In this section we show that  toral automorphisms satisfying
the assumptions of Theorem \ref{main} are generic in $SL(d,\Z)$.
We would like to thank A. Gorodnik,  P.~Sarnak,
and D. Speyer for helpful discussions on this topic.
\vskip.1cm

We consider $SL(d,\R)$, $d\ge 2$, as a subset of the Euclidean
space  of $d\times d$ matrices  equipped with the norm
$\|A\|=(\text{Tr}(A^*A))^{1/2}$. We denote
$$
B_\R (T)=\{A\in SL(d,\R):\|A\|\le T\} \quad \text{and} \quad
B_\Z(T)=\{A\in SL(d,\Z):\|A\|\le T\}.
$$
It is known~\cite[Theorem~3.1]{DRS} that number of
matrices  in $B_\Z(T)$  grows as the Haar volume of $B_\R(T)$.
More precisely
$$
\# B_\Z(T) \sim vol(B_\R(T)) \sim c\,T^{d^2-d}.
$$


Let $E(T)$ be the subset of $B_\Z(T)$ that consists of  matrices
that do {\em not}\, satisfy the assumptions  of Theorem~\ref{main},
i.e. are either reducible (over $\Q$), or non-Anosov, or have at least three
eigenvalues of the same modulus.

\begin{proposition}\label{asymp}
There exists $\delta>0$ such that
$\; \# E(T) \ll T^{d^2-d-\delta}.$
\end{proposition}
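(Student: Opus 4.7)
I would decompose $E(T)\subseteq E_r(T)\cup E_{\mathrm{na}}(T)\cup E_3(T)$ into the three failure modes — reducible over $\Q$, non-Anosov (having an eigenvalue of modulus $1$), and having three eigenvalues of the same modulus — and bound each piece by $T^{d^2-d-\delta_i}$ for some $\delta_i>0$. Since $\#B_\Z(T)\sim c\,T^{d^2-d}$, any such bound represents a genuine polynomial saving over the baseline.

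For the non-Anosov and three-eigenvalues pieces, the key observation is that each is contained in a proper real algebraic subvariety of $SL(d,\R)$ cut out by a single non-trivial polynomial in the matrix entries with integer coefficients. Indeed, $A$ has an eigenvalue of modulus $1$ iff the characteristic polynomial $p_A(x)$ shares a complex root with its reciprocal $p_A^*(x):=x^d p_A(1/x)$, and this is implied by the vanishing of $\operatorname{Res}(p_A,p_A^*)$. For the three-eigenvalues condition, observe that in the irreducible case three simple eigenvalues of common modulus $r$ must consist of one real eigenvalue $\pm r$ together with a complex conjugate pair $re^{\pm i\theta}$, yielding a relation $\lambda_i^2=\lambda_j\lambda_k$ for some triple; symmetrizing over triples produces a non-trivial polynomial $Q(A)$ in the entries. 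A quantitative count of lattice points on such a proper subvariety of $SL(d,\R)$ defined over $\Q$ — via Eskin--Mozes--Shah, Duke--Rudnick--Sarnak, or a direct slicing argument — then yields the saving $T^{-\delta}$ over the baseline $T^{d^2-d}$.

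The reducible piece $E_r(T)$ requires separate handling since reducibility over $\Q$ is not a polynomial condition on $A$. I would stratify by the dimension $k\in\{1,\ldots,d-1\}$ of a rational invariant subspace $W$: each reducible $A$ preserves some primitive rational $W$, so $A$ lies in the integer points of a maximal parabolic $P_W\subset SL(d,\R)$ of codimension $k(d-k)$. For each fixed $W$, the count $\#\{A\in SL(d,\Z)\cap B_\Z(T):AW\subseteq W\}$ is $\ll T^{d^2-d-k(d-k)}$ by a volume/lattice estimate for the parabolic. Summing over admissible $W$ requires a Schmidt-type bound on the number of rational subspaces of bounded height: invariance under some $A\in B_\Z(T)$ forces the covolume of $W\cap\Z^d$ to grow at most polynomially in $T$, and one must show that the count of such subspaces grows strictly slower than $T^{k(d-k)}$. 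Balancing these two quantities — the parabolic gain against the number of admissible $W$, while avoiding double counting when $A$ admits several invariant subspaces — is the principal obstacle of the argument, in contrast to the cleaner polynomial-vanishing approach that handles $E_{\mathrm{na}}$ and $E_3$.
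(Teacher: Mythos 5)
Your treatment of the non-Anosov and three-eigenvalues pieces is essentially sound and in the spirit of the paper's Lemma~\ref{generic}: both are captured by nontrivial polynomial relations on the entries of $A$, and a lattice-point count on a proper subvariety then gives the power saving. (One small omission: in the irreducible case, three eigenvalues of equal modulus need not be one real plus a conjugate pair; two distinct conjugate pairs of the same modulus also occur, and you would need the additional relation $\prod_{i,j,k,l}(r_ir_j-r_kr_l)=0$ over distinct quadruples, as the paper includes.) The paper invokes Lemma~4.2 of Nevo--Sarnak for the count, whereas you suggest Eskin--Mozes--Shah or Duke--Rudnick--Sarnak; any of these can deliver the saving, so that part is interchangeable.

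The real gap is in the reducible piece. You assert that ``reducibility over $\Q$ is not a polynomial condition on $A$'' and then propose a separate parabolic-stratification argument, which you yourself flag as the ``principal obstacle'' and do not close. But for $A\in SL(d,\Z)$ the assertion is false in the sense that matters here: by Gauss's lemma, the monic integer polynomial $\chi_A$ is reducible over $\Q$ if and only if it factors into monic integer polynomials, and since $\det A=\pm1$ each factor has constant term $\pm1$. Thus if $\chi_A$ has a degree-$k$ factor with constant term $1$, some product of $k$ eigenvalues equals $1$, and $A$ satisfies the nontrivial symmetric relation
\[
\prod_{i_1<\cdots<i_k}\bigl(r_{i_1}\cdots r_{i_k}-1\bigr)=0,
\]
which is a polynomial in the coefficients of $\chi_A$, hence in the entries of $A$ (and similarly with $-1$). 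Ranging over $k=1,\dots,d-1$ and both signs shows that the reducible matrices in $SL(d,\Z)$ lie in a finite union of $\Q$-hypersurfaces, exactly as the other two pieces do. This is the key observation in the paper's proof; it dissolves the obstacle you identify and lets all three failure modes be handled uniformly by a single application of the Nevo--Sarnak lemma, with no need for the parabolic volume estimates, the Schmidt-type count of rational subspaces of bounded height, or the delicate balancing between the two.
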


To prove the above proposition we first show the following.

\begin{lemma} \label{generic}
The set $E=\cup_{T>0}E(T)$ lies in a finite union of
algebraic hypersurfaces in $SL(d,\R)$.
\end{lemma}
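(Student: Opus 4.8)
The plan is to exhibit finitely many polynomial conditions on the entries of $A \in SL(d,\R)$ that together capture "reducible, or non-Anosov, or at least three eigenvalues of equal modulus", and to check that each such condition, when it does not hold identically on all of $SL(d,\R)$, cuts out a proper algebraic subvariety --- hence a finite union of algebraic hypersurfaces. Throughout I work with the characteristic polynomial $\chi_A(t) = \det(tI - A) = t^d - c_1(A)t^{d-1} + \dots + (-1)^d c_d(A)$, whose coefficients $c_j(A)$ are polynomials in the entries of $A$ (with $c_d(A) = \det A = 1$ on $SL(d,\R)$).

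First I would dispose of reducibility. A matrix $A \in SL(d,\Z)$ is reducible over $\Q$ precisely when $\chi_A$ factors nontrivially over $\Q$, equivalently over $\Z$ by Gauss's lemma; since the factors have bounded degree and (by a standard height bound, e.g. Mignotte's inequality) coefficients bounded in terms of $\|A\|$, this is a union over finitely many factorization types $d = d_1 + d_2$, $0 < d_i < d$, of the images of the (polynomial) multiplication maps $\Z^{d_1} \times \Z^{d_2} \to \Z^d$ on coefficients. For each type the locus of $A$ with $\chi_A$ so factoring is the image of an algebraic map, hence contained in a proper subvariety of $\{$polynomials of degree $d\} \cong \A^{d-1}$ (it has strictly smaller dimension, $d_1 + d_2 - 1 < d - 1$ is false --- rather one argues it is a proper Zariski-closed condition because a generic monic integer polynomial of degree $d$ with constant term $\pm 1$ is irreducible), and pulling back under $A \mapsto \chi_A$ gives a proper subvariety of $SL(d,\R)$, i.e.\ finitely many hypersurfaces.

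Next, the non-Anosov locus. $A$ is Anosov iff no eigenvalue lies on the unit circle. Over $SL(d,\Z)$, having an eigenvalue $\lambda$ with $|\lambda| = 1$ forces, by a Kronecker-type argument, $\lambda$ to be a root of unity, so $\chi_A$ shares a common factor with $t^m - 1$ for some $m$ bounded in terms of $d$ (only finitely many cyclotomic polynomials have degree $\le d$). The condition "$\chi_A$ and $\Phi_m$ have a common root" is the vanishing of the resultant $\mathrm{Res}_t(\chi_A, \Phi_m)$, a polynomial in the entries of $A$; it is not identically zero on $SL(d,\R)$ (a generic $A$ is Anosov), so it defines a hypersurface. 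Taking the union over the finitely many relevant $m$ handles this case.

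The main obstacle is the third condition: "at least three eigenvalues of equal modulus." The clean way is to pass to the polynomial $\psi_A(s) = \mathrm{Res}_t\big(\chi_A(t), t^2 - s\big)$ (up to sign, $\prod_i (s - \lambda_i^2)$), whose roots are the squares of the eigenvalues; but "equal modulus" is not "equal square" unless one is careful about complex conjugate pairs $\lambda, \bar\lambda$ and real pairs $\lambda, -\lambda$, which already have equal modulus harmlessly. The trick I would use: consider instead the polynomial whose roots are the products $\lambda_i \lambda_j$ over $i \ne j$ (again a resultant/symmetric-function construction, polynomial in the $c_j(A)$), or more directly the squared moduli $|\lambda_i|^2 = \lambda_i \bar\lambda_i$ --- for a real matrix these are algebraic functions of the entries and their elementary symmetric functions are polynomials in the entries of $A$. "Three eigenvalues of equal modulus" then says this latter polynomial (of degree $d$ in a variable $u$, with coefficients polynomial in the entries of $A$) has a root of multiplicity $\ge 3$, after first factoring out the automatic coincidences coming from conjugate and $\pm$ pairs. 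A root of multiplicity $\ge 3$ is detected by the simultaneous vanishing of the polynomial, its first, and its second $u$-derivative, equivalently by the vanishing of a higher subresultant; each is a polynomial in the entries of $A$, and none is identically zero on $SL(d,\R)$ since, e.g., the companion matrix of a polynomial with all moduli distinct lies in (a conjugate of, hence meets every component of) $SL(d,\R)$ and has no such coincidence. Hence this locus too is contained in finitely many proper hypersurfaces. I expect the bookkeeping of separating the "harmless" equal-modulus coincidences (conjugate pairs, sign pairs) from the genuinely forbidden triple coincidences to be the delicate part; once the right auxiliary polynomial is written down, the subresultant/discriminant argument that each resulting condition is Zariski-closed and proper is routine. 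Taking the union of the finitely many hypersurfaces from the three cases proves the lemma. $\QED$
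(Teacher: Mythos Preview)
Your argument has two genuine gaps.

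\textbf{Non-Anosov case.} The claim that an eigenvalue of $A\in SL(d,\Z)$ lying on the unit circle must be a root of unity is false: Kronecker's theorem requires \emph{all} Galois conjugates to lie in the closed unit disk, whereas the other eigenvalues of $A$ need not. The companion matrix of any Salem polynomial (for instance Lehmer's degree--$10$ polynomial) lies in $SL(d,\Z)$, has irreducible characteristic polynomial, and has several eigenvalues on the unit circle that are not roots of unity. Hence resultants against finitely many cyclotomic polynomials do not capture all non-Anosov integer matrices.

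\textbf{Three eigenvalues of equal modulus.} The elementary symmetric functions of the $|\lambda_i|^2$ are \emph{not} polynomials in the entries of $A$. Already for $d=2$ one has $|\lambda_1|^2+|\lambda_2|^2=(\text{Tr}\,A)^2-2\det A$ when the eigenvalues are real but $|\lambda_1|^2+|\lambda_2|^2=2\det A$ when they form a complex conjugate pair; the expression is only piecewise polynomial, with pieces separated by the discriminant. So your auxiliary ``polynomial with roots $|\lambda_i|^2$'' does not have coefficients polynomial in the entries of $A$, and the subresultant machinery cannot be applied to it.

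The paper circumvents both problems with a single device: use $\det A=1$ to convert each modulus condition into a genuine symmetric-polynomial relation among the eigenvalues $r_1,\dots,r_d$ themselves. A complex conjugate pair $r_i,r_j$ on the unit circle gives $r_ir_j=1=\prod_{k\ne i,j}r_k$; a real $r_i=\pm 1$ gives $r_i=\prod_{k\ne i}r_k$. For an irreducible $A$ with three eigenvalues of equal modulus there must be either two conjugate pairs with $r_ir_j=r_kr_l$, or a conjugate pair together with a real eigenvalue giving $r_ir_j=r_k^2$. For reducibility, any monic $\Z$-factor of $\chi_A$ has constant term $\pm1$, so some subproduct $r_{i_1}\cdots r_{i_k}=\pm1$. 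Taking the product over all index choices makes each of these relations symmetric in the $r_i$, hence a polynomial in the coefficients of $\chi_A$ and thus in the entries of $A$; and one checks directly that none of them vanishes identically on $SL(d,\R)$.
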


\begin{proof}
We consider $A \in SL(d,\Z)$ and denote its eigenvalues
 by $r_1 , r_2 , ..., r_d$. We will describe explicit relations
 on the entries of matrices in $E$ as symmetric polynomials in
 $r_1 , r_2 , ..., r_d$.
Since the eigenvalues are the roots of the characteristic polynomial $\chi_A$,
such a polynomial can be expressed as a polynomial in the
coefficients of $\chi_A$, and hence as one in the entries of $A$.

Suppose that $A$  is irreducible and  $\chi_A$ has three roots 
of the same modulus,
in particular, $d\ge 3$.  Then $\chi_A$ must have
either two pairs of complex conjugate eigenvalues of the same modulus
or a pair of complex conjugate eigenvalues of the same modulus
as a real eigenvalue. In the first case, the eigenvalues satisfy
$$
P=\prod _{i,j,k,l} (r_i r_j-r_k r_l)=0, \quad
\text{where } 1 \le i,j,k,l \le d  \,\text{ are  distinct},
$$
and in the second case they satisfy
$$
P=\prod _{i,j,k} \,(r_i r_j-r_k^2)=0, \quad
\text{where } 1 \le i,j,k \le d  \,\text{ are  distinct.}
$$

Suppose that $A$ is not Anosov, i.e. it has an eigenvalue of modulus 1.
If $A$ has a complex pair $r_i,r_j$ on the unit circle, then
$r_i  r_j=1$ and the same holds for the product of all other
eigenvalues since $\det A =1$. Thus we obtain a symmetric polynomial
relation
$$
P=\prod_{i\ne j} ( r_i r_j-\prod_{k\ne i,j} r_k ) =0.
$$
Similarly, if  $r_i=1$ or $r_i=-1$ for some $i$,  we have
$$
P=\prod_{i} ( r_i -\prod_{k\ne i} r_k ) =0.
$$
These relations are non-trivial if $d\ge3$. For $d=2$, $A$ is not
Anosov if  and only if $| Tr A| \le 2$. Such matrices lie in
affine hyperplanes $Tr A = k$, $k=0,\pm1,\pm2$.

Finally, suppose that $A$ is reducible, i.e. its characteristic
polynomial $\chi_A$ is reducible over $\Q$.
Since $A$ is in $SL(d,\Z)$, $\chi_A$ is reducible over $\Z$ and
the factors have constant terms equal to $1$ or $-1$. It easy to see
that having such a factor of a given degree imposes a nontrivial
constrain on the coefficients of $\chi_A$.  Alternatively, one can
give relations on the roots as before. For example, if $\chi_A$ has
a factor of degree $k$ with constant term  $1$, then
some product of $k$ eigenvalues is $1$ and hence
the eigenvalues satisfy the relation
$$
P=\prod _{i_1,..., i_k} \,(r_{i_1} \cdots r_{i_k} -1)=0, \quad
\text{where } 1 \le i_1,..., i_k \le d  \,\text{ are  distinct.}
$$
\end{proof}

Now we deduce Proposition~3.1 from the following result
by A. Nevo and P.~Sarnak, which is a particular case of
Lemma~4.2 in \cite{NS}.

\begin{lemma}\label{NS} Let $G$ be a subgroup of $GL(m,\R)$
isomorphic to $SL(d,\R)$. Let $v\in \Z^m$ and $V=Gv$
be the orbit of $v$. Assume that there is a polynomial
$P\in\Q[x_1,..., x_m]$ that does not vanish identically on $V$.
Then there exists $\delta=\delta(P)>0$ such that
$$
\#\{A\in G \cap GL(m,\Z):\, \|A\|\le T, \;P(Av)=0\}\ll
T^{d^2-d-\delta}.
$$
\end{lemma}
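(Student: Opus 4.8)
The plan is to reduce Lemma~\ref{NS} to a counting statement for integer points on a proper subvariety of $SL(d,\R)$ itself, and then to bound that count by combining the lattice-point asymptotics for $SL(d,\R)$ with an effective, spectral-gap counting estimate.

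First I would carry out the reduction. After clearing denominators we may assume $P\in\Z[x_1,\dots,x_m]$. Fix an algebraic isomorphism $\phi\colon SL(d,\R)\to G$; standard rationality and isogeny considerations --- which affect only multiplicative constants, not the exponent --- allow us to take $\phi$ defined over $\Q$ and mapping a finite-index subgroup of $SL(d,\Z)$ into $G\cap GL(m,\Z)$. Consider the regular function
\[
   Q\colon SL(d,\R)\to\R,\qquad Q(g)=P\bigl(\phi(g)v\bigr).
\]
Since $P$ does not vanish identically on $V=Gv=\phi\bigl(SL(d,\R)\bigr)v$, the function $Q$ is not identically zero, so $Z=\{g\in SL(d,\R):Q(g)=0\}$ is a proper $\Q$-subvariety of dimension at most $d^{2}-2$, and $\deg Q$ is bounded in terms of $\deg P$ and $\deg\phi$ only. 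Because $\phi$ and $\phi^{-1}$ are polynomial maps, $\|\phi(g)\|\le T$ is equivalent to $\|g\|\le T^{O(1)}$, so a power-saving bound in one normalization transfers (with a different $\delta$) to the other. It therefore suffices to show
\[
   \#\{\gamma\in SL(d,\Z):\ \|\gamma\|\le T,\ Q(\gamma)=0\}\ \ll\ T^{\,d^{2}-d-\delta}.
\]

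Next I would do the count. Besides the asymptotics $\#\{\gamma\in SL(d,\Z):\|\gamma\|\le T\}\asymp T^{d^{2}-d}$ quoted from \cite{DRS}, I would use that $SL(d,\Z)$ equidistributes in norm balls with a \emph{power-saving} error term --- a consequence of the spectral gap of $L^{2}\bigl(SL(d,\Z)\backslash SL(d,\R)\bigr)$ (property (T) for $d\ge3$, bounds toward Ramanujan for $d=2$) via the standard thickening/mollification argument --- which also lets one count lattice points in mildly irregular regions. For $\eta>0$ set $Z_{\eta}=\{g:\dist(g,Z)\le\eta\}$. A Remez-type non-concentration inequality for the bounded-degree polynomial $Q$, applied on a cover of $B(S)\cap SL(d,\R)$ by unit balls, gives $\mathrm{vol}\bigl(Z_{\eta}\cap B(S)\bigr)\ll\eta\,\mathrm{vol}\bigl(B(S)\bigr)\asymp\eta\,S^{d^{2}-d}$. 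Feeding the (mollified) family $Z_{\eta}\cap B(T)$ into the effective count would then give a bound of the shape
\[
   \#\{\gamma\in SL(d,\Z)\cap B(T):\ \gamma\in Z_{\eta}\}\ \ll\ \eta\,T^{d^{2}-d}\ +\ \eta^{-N}T^{\,d^{2}-d-\kappa},
\]
with absolute $N,\kappa>0$: the first term is the volume contribution, the second the error term, which degrades as $\eta\to0$ because the target becomes less regular. Choosing $\eta=T^{-\beta}$ with $0<\beta<\kappa/(N+1)$ makes both terms $\ll T^{\,d^{2}-d-\delta}$ with $\delta=\min(\beta,\kappa-N\beta)>0$; since $\{Q=0\}\subset Z_{\eta}$, this yields the reduced claim.

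The hard part will be the effective counting step --- producing an equidistribution statement for $SL(d,\Z)$-lattice points that is quantitative enough to apply to the thin sets $Z_{\eta}\cap B(T)$, with explicit control of how the error term blows up as $\eta\to0$. This is precisely what \cite[Lemma~4.2]{NS} supplies; it rests on the spectral gap and on the wavefront/well-roundedness of norm balls, with the case $d=2$, where property (T) is unavailable, relying on the Selberg-type bound in its place. By contrast, the algebraic reduction and the polynomial non-concentration estimate are routine.
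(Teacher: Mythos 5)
First, some context you could not have known: the paper does not prove this lemma at all. It is quoted verbatim as a particular case of Lemma~4.2 of Nevo--Sarnak \cite{NS}, and the authors' only contribution is to apply it (with $m=d^2$, the diagonal embedding $g\mapsto g\times\dots\times g$, and $v=\Id$). So there is no internal proof to compare yours against. Judged as a standalone proof, your proposal is circular at the decisive point: the step you yourself single out as the hard one --- an effective lattice-point count for the mollified thin sets $Z_\eta\cap B(T)$ with explicit control of the error as $\eta\to 0$ --- is exactly what you then attribute to \cite[Lemma~4.2]{NS}, i.e.\ to the very result being proven (the statement here is a special case of that lemma). The surrounding material (clearing denominators, passing to $Q(g)=P(\phi(g)v)$, the Remez-type volume bound for $Z_\eta$, optimizing $\eta=T^{-\beta}$) is a reasonable outline of how such power-saving bounds are obtained, but it does not amount to a proof unless the spectral-gap counting estimate for the thickened sets, with its stated $\eta$-dependence, is actually established --- and that is the entire content of the result.

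There is also a concrete gap in your reduction to $SL(d,\Z)$ with its own norm. You assert that $\|\phi(g)\|\le T$ is equivalent to $\|g\|\le T^{O(1)}$ and that a power saving transfers ``with a different $\delta$''. The exponent bookkeeping does not work: if $\|\phi(g)\|\asymp\|g\|^{a}$ with $a<1$ (polynomiality of $\phi^{-1}$ only gives $a=1/\deg\phi^{-1}$), then $\|\phi(\gamma)\|\le T$ only forces $\|\gamma\|\ll T^{1/a}$, and your reduced estimate yields $\ll T^{(d^2-d-\delta)/a}$, which can exceed $T^{d^2-d}$ --- no saving against the exponent $d^2-d$ appearing in the statement. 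The transfer is harmless only when the norm on $G$ is comparable with exponent one to the norm on $SL(d,\R)$, which is true for the diagonal embedding used in the paper (the authors note the norms are equivalent there) but not in the stated generality; Nevo--Sarnak avoid this by counting intrinsically in the given representation. Relatedly, your rationality step gives an inclusion in the wrong direction for an upper bound: mapping a finite-index subgroup of $SL(d,\Z)$ \emph{into} $G\cap GL(m,\Z)$ does not show that every integral point $A$ with $P(Av)=0$ is accounted for by a lattice element of $SL(d,\Z)$; one needs commensurability of $\phi^{-1}(G\cap GL(m,\Z))$ with $SL(d,\Z)$, and uniformity of the Remez constant over unit balls far from the identity also requires an argument. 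These are secondary, but together with the circularity they mean the proposal is a plausible sketch of the Nevo--Sarnak strategy rather than a proof.
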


We apply this proposition with $m=d^2$ and identify $\R^m$ with
$\text{Mat}_{d\times d}(\R)$ as follows,   first $d$ coordinates are
identified with the first column, next $d$ coordinates with the second
column, etc. We embed $SL(d,\R)$ into $GL(m,\R)$ diagonally
$g\mapsto g\times g\times\ldots\times g$ ($d$ times). It is
easy to see that under these identifications, the action of $SL(d,\R)$ 
on $\R^m$ is the same as the matrix multiplication on the left in 
$\text{Mat}_{d\times d}(\R)$.

We take $v\in \Z^m =\text{Mat}_{d\times d}(\Z)$ to be the identity
matrix. Then $V$ is identified with $SL(d,\R)$ and applying 
Lemma~\ref{NS} to the polynomials given in
Lemma~\ref{generic} yields Proposition~\ref{asymp}. We note that the
norm in Lemma~\ref{NS} comes from $GL(m,\R)$ and is different from
the norm we have defined on $SL(d,\R)$. However this does not make a
difference for the asymptotics since the norms are equivalent.


\appendix
\section{Some examples}
\begin{center}{\bfseries by Rafael de la Llave}
\end{center}

We consider matrix cocycles over an Anosov diffeomorphism $g$
of a manifold $\M$. Such a cocycle is given by a function
$A : \M \to GL(d,\R)$.
Our goal is to construct examples of (a) cocycles which are
conformal at periodic points but are not uniformly quasi-conformal
and (b) Anosov diffeomorphisms such that the restriction of the
derivative to an invariant distribution gives a cocycle as in (a).

For a matrix $A \in GL(d,\R)$ we denote by $K(A)= \|A\| \cdot \|A^{-1}\|$
its {\it quasi-conformal distortion}\, with respect to a norm $\| . \|$ on $\R^d$.
$A$ is called {\it conformal}\, with respect to a given norm if $K(A)=1$.
For example, if $A$ is diagonalizable over $\C$ and all its eigenvalues
are of the same modulus, then $A$ is conformal with respect to a
norm given by a diagonalization of $A$.  We say that a cocycle
$A: \M \to GL(d,\R)$ is {\em uniformly quasi-conformal}\,
if $K(A^n(x))$ is uniformly bounded in $x$ and $n$, where
$$
A^n(x) = A(g^{n-1} x)\cdots A(gx) A(x).
$$
Unlike conformality, uniform quasi-conformality does not depend
on the choice of a norm.

Examples of (a) were already constructed in~\cite{KS4} but
did not give rise to examples of (b).
We also note that our examples are contained in infinite
dimensional families which include linear automorphisms of the tori,
and so they show that parametric rigidity is also impossible.

\begin{example}\label{cocycle}
Let $g$ be an Anosov diffeomorphism  of a manifold $\M$. There
exists a family of $SL(3, \real)$-valued cocycles
$A_\epsilon$, $|\epsilon | < 1$, over $g$ such that:
\begin{itemize}
\item
$A_0$ is a  constant conformal matrix;
\item
$A_\epsilon(x)$ is jointly analytic in $\epsilon$ and $x$;

\item For any $\epsilon$ and any periodic point $p=g^np$,
$A_\epsilon^n (p)$ is conformal in some norm;
\item For any $\epsilon \ne 0$, the cocycle $A_\epsilon$ is not uniformly quasi-conformal.
\end{itemize}
\end{example}

Note that in Example~\ref{cocycle} we can take $g$ to be any Anosov
diffeomorphism and we do not even require transitivity.

\begin{remark}
\label{dimension} One can construct similar examples
taking values in $SL(d, \real)$ for $d \ge 3$. It was shown
in \cite[Theorem 1.3]{KS4} that there are no such examples when
$d =2$, and the result is trivial when $d = 1$.
\end{remark}

The second example shows that this phenomenon is also possible in
derivative cocycles.

\begin{example}\label{maps}
There exists $d$ (\eg $d=9$) and an analytic family of analytic maps
$f_\epsilon: \T^d\to\T^d $ such that:
\begin{itemize}
\item
$f_0$ is an Anosov  linear automorphism of $\torus^d$;
\item
For any $\epsilon$, $Df_\epsilon$ preserves a three dimensional
bundle $E$;
\item
For any $\epsilon$ and any periodic point $p=f^np$,
$Df^n_\epsilon|_{E(p)} $ is conformal in some norm;
\item
For any $\epsilon \ne 0$, $\,Df_\epsilon|_{E}$ is not uniformly
quasi-conformal.

\end{itemize}
\end{example}

\vskip.2cm

\subsection{Construction of Example~\ref{cocycle}}

We pass to a finite power $f\equiv g^N$ of $g$ for which there exist
two fixed points $x_1,x_2$ and two balls $B_1,B_2$ around them such
that for every sequence $\sigma\in \{1,2\}^\natural$ there exists a
point $x^*$ such that
\begin{equation}\label{itinerary}
f^j(x^*) \in B_{\sigma_j}.
\end{equation}
This can be easily arranged using Markov partitions. Of course, the
point $x^*$ is far from being unique, as any point on its  local
stable manifold would have the same itinerary.

We construct the family of
cocycles with required properties over $f$ to illustrate the idea
and then indicate how to carry out similar construction over $g$.
We take
$$
 A_\e(x) = \begin{pmatrix} R_\beta &\e\varphi(x)\\
0&1\end{pmatrix}, \quad\text{where}\quad
R_\beta = \begin{pmatrix} \;\;\;\cos 2\pi\beta&\sin 2\pi \beta\\
-\sin 2\pi \beta &\cos 2\pi\beta\end{pmatrix},
$$
$\beta$ an irrational
number, and $\varphi : \M \to \real^2$ an analytic function satisfying
some properties to be specified later. We observe that
$$
A^n_\e(x) = \begin{pmatrix} R_{n\beta} & \e\widetilde\varphi_n (x)\\
0&1\end{pmatrix} , \quad \text{where}
$$
$$
\widetilde\varphi_n (x)
 = \sum_{j=1}^n R_{(n-j)\beta} \,\varphi (f^j(x))\\
 = R_{n\beta} \sum_{j=1}^n R_{-j\beta} \,\varphi (f^j(x)).
$$
Clearly, the eigenvalues of $A^n_\e(x)$ are $e^{2\pi i n\beta}$,
$e^{-2\pi i n\beta}$, $1$. Since $\beta$ is irrational, all of them
are different, and $A^n_\e(x)$ is diagonalizable. Therefore $A_\e^n (p)$
is conformal in some norm whenever $f^n(p)=p$.
We construct a function $\varphi$ and a point $x^*$ such that
\begin{equation*}
\Big\| \sum_{j=1}^n R_{-j\beta} \,\varphi (f^j (x^*))\Big\| \to\infty
\quad \text{as } n\to\infty ,
\end{equation*}
which implies that  $A_\e$ is not uniformly quasiconformal
for every $\e\ne 0$.

\vskip.1cm

Since the $C^\infty$ case is easier we will discuss it first.
We choose an increasing sequence of integers
 $\J\equiv\{j_k\}_{k=1}^\infty$ such that
$$
\{ j_k\beta  \mod 1 \}\to 0 \quad\text{and hence}\quad
\|R_{-j_k\beta} -\text{Id}\| \to 0 \;\mbox{as}\;\; k\to\infty.
$$
We take a sequence $\sigma$ such that
\begin{equation}\label{sequence}
\sigma_\ell = \begin{cases}
1&\text{if } \ell \in \{j_k\}_{k\in\natural}  \\
\noalign{\vskip6pt} 2&\text{if } \ell\notin \{j_k\}_{k\in\natural}

\end{cases}
\end{equation}
and consider $x^*$ that satisfies \eqref{itinerary} for the sequence
\eqref{sequence}. Now, we choose a function $\varphi$ so that
$$
\varphi (x) =
\displaystyle \binom10 \;\text{ if }x\in B_1 \quad\text{and}\quad
\varphi (x) = \displaystyle \binom00  \;\text{ if } x\in B_2.
$$
Then we have
\begin{equation*}
\Big\| \sum_{j=1}^n R_{-j\beta} \varphi (f^j x^*)\Big\| = \Big\|
\bigg( \sum_{j_k\le n} \mathop{R_{-j_k\beta}}\bigg)
\binom10\Big\| \to\infty \quad \text{as } n\to\infty.
\end{equation*}

The analytic case is slightly more complicated since we cannot use
functions with compact support.
We define a sequence $\J$ in a different way
$$
j\in\J \quad \text{if and only if}\quad
\measuredangle\Big(R_{-j\beta}\binom10,\binom10\Big)<\pi/3.
$$
Consider a corresponding point $x^*$ and an analytic
function $\varphi$ satisfying

$$
 \left|\varphi (x) - \binom10 \right| \le 10^{-8}\; \text{ if } x\in B_1
 \quad \text{and} \quad
 \left|\varphi (x)-\binom00\right| \le 10^{-8} \; \text{ if } x\in B_2.
$$
Such functions can easily be obtained by modifying the corresponding
$C^\infty$ examples.
Since our new sequence has asymptotic density $1/3$ in
$\mathbb N$ a straightforward estimate implies divergence
of corresponding sum.

The point of Example~\ref{cocycle} is that the matrices that diagonalize
$A_\e^n(p)$ are not bounded uniformly in $p$, so while
$A_\e^n(p)$ is conformal in some norm, $K(A_\e^n(p))$ are not
uniformly bounded with respect to the standard norm.

Similar constructions can be carried out for the initial diffeomorphism $g$. Instead of balls $B_1$ and $B_2$ one  needs to work with neigborhoods of $\{x_1, g(x_1),..., g^{N-1}(x_1)\}$ and  $\{x_2, g(x_2),..., g^{N-1}(x_2)\}$ and redefine $\varphi$ accordingly.


\subsection{Construction of Example~\ref{maps}}
Let $B$ and $C$ be hyperbolic integer matrices with determinant 1
such that $r>1$ is an eigenvalue of $C$, and $r e^{\pm2\pi i\beta}$
are eigenvalues of  $B$ for  some irrational $\beta$.
Then
$$
  f_0(x,y)  = (Bx, Cy),
$$
is an Anosov toral automorphism. As in Example~A.1 we pass to a
finite power if necessary, this only changes the value of $r$. To
embed Example~\ref{cocycle} into a diffeomorphism we consider a
perturbation of the form
\begin{equation*}
f_\e (x,y) = (Bx + \e\psi (y),Cy),
\end{equation*}
where $\psi$ takes values in the two dimensional subspace $U$
corresponding to the eigenvalues $re^{\pm 2\pi i\beta}$.
Let $v$ be an eigenvector of $C$ corresponding to $r$. Then
the three dimensional space $W=U \oplus \R v$ is invariant under
$f_\e$. The restriction of the differential $Df_\e$ to $W$ is of the
form $rA_\e$, where
$$
 A_\e(y) = \begin{pmatrix} R_\beta &\frac{\e}{r}\,\frac{\partial \psi}{\partial v}(y)\\
0&1\end{pmatrix}.
$$
We consider this restriction as a cocycle  over Anosov automorphism
$g(y)=Cy$. Thus Example~A.3 is reduced to Example~A.1 provided we
can solve the cohomological equation
$$
\frac{1}{r}\,\frac{\partial\psi}{\partial v}=\varphi,
$$
where $\varphi$ as in  Example A.1. If the
vector $v$ is Diophantine then by a theorem of Kolmogorov this
cohomological equation has a solution $\psi$ if and only if
$\int\varphi=0$ (see e.g.~\cite{Russ}). The vector $v$ is algebraic
and hence Diophantine as an eigenvector of integral matrix.
Additional condition $\int\varphi=0$ can be accommodated since
we have a lot of freedom outside the balls $B_1$ and $B_2$.

The existence of the matrices $B$ and $C$ satisfying the required properties
can be easily seen if one finds an integer coefficients
monic polynomials with corresponding properties.
We communicated the above question to Professor F.~Voloch who kindly
formulated it and posted it at {\tt http://mathoverflow.org}.
In less than a day we obtained several responses~\cite{V} from Buzzard and other participants.
For example, one can take polynomials
$$
   x^3+3x^2+2x-1 \quad \text{and}\quad x^6-2x^4-3x^2-1,
$$
as characteristic polynomials of $B$ and $C$, which gives a 9 dimensional example.


\end{document}